\begin{document}

\title{On properties of skew-framed immersions cobordism groups 
\footnote{Supported by RFBR Grant No. 15-01-06302}
}
\author{P.M.Akhmet'ev
\footnote{National Research University
``Higher School of Economics'';
IZMIRAN, Kaluzhskoe Sh. 4, Troitsk, Moscow 108840, Russia}, 
O.D.Frolkina
\footnote{Chair of General Topology and Geometry, Faculty of Mechanics and Mathematics, M.V. Lomonosov Moscow State University,
Leninskie Gory 1, GSP-1, Moscow 119991, Russia}
}

%\language=1
\sloppy \theoremstyle{plain}
\newtheorem{theorem}{Theorem}
\newtheorem*{main*}{Main Theorem}
\newtheorem*{theorem*}{Theorem}
\newtheorem{lemma}{Lemma}
\newtheorem{proposition}{Statement}
\newtheorem*{proposition*}{Statement}
\newtheorem{corollary}{Corollary}

\theoremstyle{definition}
\newtheorem{definition}{Definition}
\newtheorem{remark}{Remark}
\newtheorem*{remark*}{Remark}
\newtheorem*{example*}{Example}
\newtheorem{example}{Example}
\def\Z{{\Bbb Z}}
\def\R{{\Bbb R}}
\def\RP{{\Bbb R}\!{\rm P}}
\def\N{{\bf N}}
\def\L{{\bf L}}
\def\C{{\Bbb C}}
\def\CP{{\Bbb C}\!{\rm P}}
\def\A{{\bf A}}
\def\D{{\bf D}}
\def\Q{{\bf Q}}
\def\i{{\bf i}}
\def\j{{\bf j}}
\def\k{{\bf k}}
\def\E{{\bf H}}
\def\F{{\bf F}}
\def\J{{\bf J}}
\def\G{{\bf G}}
\def\I{{\bf I}}
\def\a{a}
\def\b{{\bf b}}
\def\bb{{\dot{b}}}
\def\e{{\bf e}}
\def\II{{\dot{\bf I}}}
\def\f{{\bf f}}
\def\d{{\bf d}}
\def\H{{\bf H}}
\def\id{{\operatorname{id}}}
\def\Ker{{\operatorname{Ker}}}
\def\const{{\operatorname{const}}}
\def\fr{{\operatorname{fr}}}
\def\st{{\operatorname{st}}}
\def\mod{{\operatorname{mod}\,}}
\def\cyl{{\operatorname{cyl}}}
\def\dist{{\operatorname{dist}}}
\def\sf{{\operatorname{sf}}}
\def\dim{{\operatorname{dim}}}
\def\dist{\operatorname{dist}}
\def\Img{\operatorname{Im}}

\maketitle

\begin{abstract}
In this paper, we introduce geometric technique
of working with skew-framed manifolds.
It allows us to study 
stable homotopy groups of some
Thom spaces by geometric means.
We schematically describe 
how our results 
(which are also of independent interest)
can be applied
to obtain a proof
of the Baum-Browder theorem
stating non-immersibility of $\RP^{10}$ to $\R^{15}$.
\end{abstract}

\section*{Introduction}

Our paper develops ideas of
the Lecture course
 \cite{Course} and we devote it to the memory
 of Yuri Petrovich Solovyov.

\smallskip

In \cite{A-F}, the authors 
suggested and partly realized a scheme
of a new geometric proof
of the Baum-Browder theorem
concerning non-immersibility of
projective space $\RP^{10}$ 
to euclidean space $\R^{15}$
\cite[Corollary (9.9)]{BB}. 
It turns out that 
the obstruction group 
for immersibility of $\RP^{10}$ to $\R^{15}$ 
is closely connected with the 
cobordism 
group
 $Imm^{sf(5)}(3,5)$ 
 of skew-framed immersions
of 3-dimensional manifolds to
euclidean space $\R^8$. 
More precisely: the group
$Imm^{sf(5)}(3,5)$ can be interpreted as a subgroup 
of the obstruction group
for the immersion mentioned above
 \cite{A-F}. 
In this paper we 
compute this cobordism group
and study some of its geometric-type properties.

Note that the Pontryagin-Thom construction \cite{pontr}, \cite{Thom} 
provides an isomorphism
 $Imm^{sf(5)} (3,5) \cong \Pi_8 (\RP^\infty _5)$, where
 $\RP_5^\infty  = \RP^{\infty} / \RP^{4}$ is
stunted projective space.
In these terms Theorem \ref{th1} 
states:
$$
\Pi _8 (\RP^8 _5) \cong \Z /2 \oplus \Z /2 ;
\quad
\Pi _8 (\RP^\infty _5) \cong \Z /2
.
$$
Results of
Theorems \ref{zamena}, \ref{th4} are
more technical. They are used in the proof
of the Baum-Browder theorem, which will be given in our subsequent paper.

\subsection*{Notation, conventions}

A number above an arrow
$\stackrel{(n)}{\to }$ means that
this map was defined in formula 
$(n)$.

The symbol $\cong $
has several meanings in the paper:
diffeomorphism of manifolds,
isomorphism of vector bundles,
isomorphism of groups.
Its concrete meaning is clear from the context.

All manifolds and maps are supposed to be smooth.
Manifolds can have boundaries.
Sometimes the dimension $m$ of a manifold $M$
is expressed in notation:
 $M^m$.

By $c$ we denote a constant map (to the point $c$).

Considering preimages of submanifolds,
we assume necessary transversality;
see e.g.
\cite{gp}.

A suspension
$Ef$
of an immersion
$f : M^n \looparrowright \R^{n+k}$
 is the immersion defined as the composition
$ M^n \stackrel{f}{\looparrowright }\R^{n+k}\subset \R^{n+k+1}$, where the second embedding is
the standard embedding onto the hyperplane $x_{n+k+1} =0 $.

The normal bundle of an immersion
 $f: M\looparrowright \R^{n+k} $ is denoted by $\nu (f)$, or simply by $\nu (M)$, 
if it is clear which immersion is meant.

For a submanifold
  $M\subset N$ we will use
  the evident isomorphism
$\nu (M) \cong \nu (M,N) \oplus (\nu (N) )|_M$.

We make free with $\varepsilon $ to denote
the trivial line bundle (over any space),
and with $\gamma $ to denote the tautological line bundle
over projective space (of arbitrary dimension).

For a vector bundle
 $\xi $ over a base space $B$
and a map $\kappa :M\to B$,
the pullback bundle
 $\kappa ^{*} (\xi )$
is defined in a standard way.
For $\xi = k\varepsilon $
we fix an evident isomorphism
$\kappa ^{*}(\xi ) \cong k\varepsilon $.

For
$N\in \mathbb N \cup \{ \infty \}$,
the $N$-dimensional real projective space
is denoted by $\RP^N$, 
and 
the stunted projective space is denoted by
 $\RP^{N}_k = \RP^{N}/ \RP^{k-1}$ .

Points of the total space
$ E( k_1\gamma \oplus k_2\varepsilon  ) $
of the bundle
$ k_1\gamma \oplus k_2\varepsilon   $
over $\RP^N $
are written as rows
\begin{equation}\label{points-of-total-space}
[ x; \lambda _1 ,\ldots , \lambda _{k_1 } , \mu _1 ,\ldots , \mu _{k_2} ],
\quad
\text{ where }
\quad
x\in S^N ,
 \lambda _1 ,\ldots , \lambda _{k_1 } , \mu _1 ,\ldots , \mu _{k_2} \in \R.
\end{equation}
Such a row is a pair of identified rows 
$$
( x; \lambda _1 ,\ldots , \lambda _{k_1 } , \mu _1 ,\ldots , \mu _{k_2} )
\sim
( -x; -\lambda _1 ,\ldots , -\lambda _{k_1 } , \mu _1 ,\ldots , \mu _{k_2} ) .
$$

By $\rho : S^N \to \RP^N$, $x\mapsto [x]= \{ x,-x\}$ we denote the standard
2-fold covering.

Using rows
$(x; \lambda _1 , \ldots , \lambda _k)$
to denote 
points of the space
$E({\rho }^{*}(k\gamma ) )$,
we define the isomorphism
${\rho }^{*}(k\gamma ) \cong k\varepsilon $
of bundles over $S^N$ 
using the formula
\begin{equation}\label{P-pullback}
E({\rho }^{*}(k\gamma ) )\to
E(k\varepsilon ),
\quad
(x; \lambda _1 , \ldots , \lambda _k) \mapsto
(x; \lambda _1 , \ldots , \lambda _k) .
\end{equation}

The embeddings  $\RP^n \subset \RP^{n+k}$
and the isomorphisms
$\nu (\RP^n , \RP^{n+k}) \cong k\gamma $
are standard ones.

A symbol 
$\Pi _n (X)$ denotes the $n$-th stable
homotopy group of space $X$; and
$\Pi _n = \Pi _n (S^0)$.
We will make use of some 
stable homotopy groups
of $\RP^\infty $ computed in
 \cite{L}.

Finally, let $I=[0,1]$.

\section{Basic Notions}

\subsection{Definition of the group $Imm ^{\xi ,B} (n,k)$}

Cobordism groups of framed embeddings were
defined and studied by L.S.~Pontryagin, see
\cite{pontr}.
Investigation
of cobordism groups of immersions with
additional structure on normal bundle
comes back to papers
\cite{Rokhlin}, \cite{Wells}, \cite{Watabe}, 
\cite{Vogel},
\cite{K-S};
see also \cite[Exercise 7.2.5]{Hirsch-book}.
(On some generalizations see e.g. \cite{Sz2}, \cite{Grant}.)

Let $\xi $ be a $k$-dimensional vector bundle over a base space
$B$, and $k\geqslant 1$.
Let $n\geqslant 0$.

\begin{definition}
A normal
$\xi $-structure
for
an immersion of an $n$-dimensional
compact manifold 
$f:M^n\looparrowright \R^{n+k}$ 
is a continuous map
$\kappa : M\to B$ together with
an isomorphism
$\Xi : \nu (f) \cong \kappa ^{*} \xi $.

Such an object is called
an immersion with normal
 $(\xi ,B)$-structure,
or shortly a $(\xi ,B)$-immersion.
It is denoted by 
triples $(f_M,\kappa _M, \Xi _M)$.
Sometimes is clear which manifold is meant, 
in these cases we write simply $(f,\kappa , \Xi )$.
\end{definition}

\begin{remark}\label{xi-structure-as-cd}
Equivalently, a $(\xi ,B)$-structure
$\Xi : \nu (f) \cong \kappa ^{*} (\xi )$ over $M$
can be thought of as a fiberwise isomorphism of bundles
$$
\begin{CD}
\nu (f)  @>>> \xi  \\
@VVV @VVV \\
M @> \kappa >> B
\end{CD}
$$
\end{remark}

\begin{remark}\label{diff-and-imm}
For a covering $p:\tilde M\to M$ 
(in particular, for a diffeomorphism
 $\varphi :M\to M$)
and an immersion $f: M\looparrowright \R^{n+k}$,
there is a natural fiberwise isomorphism
of normal bundles
$$
\begin{CD}
\nu (f\circ p) @>>> \nu (f) \\
@VVV @VVV \\
\tilde M @> p >> M
\end{CD}
$$
The corresponding isomorphism
$\nu (f\circ p) \cong p^{*} (\nu (f))$
of bundles over $\tilde M$
will be denoted by $p'$.

Hence an isomorphism
$\Xi : \nu (f) \cong \kappa ^{*} (\xi )$
induces an
isomorphism
$p^{*}(\Xi ) \circ p' : \nu (f\circ p ) \cong 
p^{*} (\nu (f) ) \cong
p^{*} (\kappa ^{*} (\xi ))$.
\end{remark}

\begin{definition}\label{bordant-triples}
Two immersions with normal
 $(\xi ,B)$-structures
$(f_0 : M^n_0 \looparrowright \R^{n+k},\kappa _0,\Xi _0)$, $(f_1 : M^n_1 \looparrowright \R^{n+k},\kappa _1,\Xi _1)$ are called cobordant
if there exist
\begin{enumerate}
\item 
a compact $(n+1)$-dimensional manifold  $\mathcal M$,
\item
a diffeomorphism
$\varphi _0\sqcup \varphi _1 : M_0 \sqcup M_1 \to \partial \mathcal M$,
\item
an immersion
$\mathcal F : \mathcal M \looparrowright \R^{n+k} \times I$ orthogonal to the boundary 
 $\R^{n+k}\times \partial I$ of the strip
$\R^{n+k} \times  I$ at any point of  $\partial M$; also, 
 the restriction
${\mathcal F}|_{\varphi _s (M_s)} $
must
equal
$M_s \stackrel{f_s}{\looparrowright } \R^{n+k}
\subset \R^{n+k} \times \{ s \} $
for $s=0,1$;
\item
a continuous map
$\kappa : \mathcal M \to B $
such that 
the composition map $\kappa \circ \varphi _s $ equals $ \kappa _s : 
M_s \to B$ for $s=0,1$; 
\item
an isomorphism
$\Xi : \nu (\mathcal F) \cong \kappa ^{*} (\xi )$,
such that 
the bundle
$
\nu (f_s) =
\varphi _s^{*}(\nu (\mathcal F) )
\stackrel{\varphi _s^{*} \circ \Xi }{\cong }
\varphi _s^{*} (\kappa ^{*} \xi )
= \kappa _s ^{*} \xi
$
over $M_s$
coincides with $\Xi _s$ for $s=0,1$.
\end{enumerate}

In other words, the triple
$(\mathcal F , \kappa , \Xi ) $ 
provides a cobordism of
$(f_0,\kappa _0,\Xi _0)$ and $(f_1,\kappa _1,\Xi _1) $;
we write
$$
\partial (\mathcal F , \kappa , \Xi )  = 
(f_0,\kappa _0,\Xi _0) \sqcup (f_1,\kappa _1,\Xi _1) .
$$
\end{definition}

In what follows, we omit
the diffeomorphism symbol
$\varphi_0\sqcup\varphi_1$,
identifying $\partial \mathcal M$ 
with $ M_0\sqcup M_1$.

By
standard arguments, this cobordism relation
is reflexive, symmetric, and transitive.

Denote by
$Imm^{\xi ,B} (m,k)$
 the set of cobordism classes
of immersions of
{\bf closed} $m$-dimensional
manifolds with normal
 $\xi $-structure.
Sometimes we 
inaccurately speak of 
$(\xi , B)$-immersions as elements 
of this group.

The sum of two cobordism classes is defined as
the union of disjoint 
representatives; thus
$Imm^{\xi ,B} (m,k)$ becomes an abelian group.

The Whitney Theorem \cite[Theorem 1.3.5]{Hirsch-book}
and the Pontryagin-Thom construction \cite{pontr},
\cite{Thom}
(see also \cite[Exercise 7.2.5]{Hirsch-book}) 
imply

\begin{proposition}\label{emdedding-case}
Suppose $k\geqslant n+2$. Then
$$
Imm^{\xi , B} (n,k) 
\cong Emb^{\xi , B} (n,k)
\cong \pi _{n+k} (T(\xi )) .
$$
\end{proposition}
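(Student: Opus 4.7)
The plan is to prove the two isomorphisms separately, both by standard but distinct arguments.

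For the first isomorphism $Imm^{\xi,B}(n,k)\cong Emb^{\xi,B}(n,k)$ I would invoke general position. Since $k\geqslant n+2$, a generic perturbation of any immersion $f:M^n\looparrowright\R^{n+k}$ is an embedding, and since the $\xi$-structure lives on the normal bundle it survives a small isotopy. One therefore gets a well-defined map $Emb^{\xi,B}(n,k)\to Imm^{\xi,B}(n,k)$ induced by inclusion, with a surjectivity statement from the above perturbation. For injectivity, given an immersed cobordism $\mathcal F:\mathcal M^{n+1}\looparrowright\R^{n+k}\times I$ between two embeddings, the codimension is $k\geqslant(n+1)+1$, which is precisely Whitney's hypothesis for approximating an immersion by an embedding; one can do this relative to the boundary since the boundary restriction is already embedded, and the $\xi$-structure is transported unchanged.

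For the second isomorphism $Emb^{\xi,B}(n,k)\cong\pi_{n+k}(T(\xi))$ I would use the Pontryagin--Thom construction with $\xi$-structures. In one direction, given a representative $(f:M^n\hookrightarrow\R^{n+k},\kappa,\Xi)$, I take a tubular neighborhood $U\subset\R^{n+k}$ of $f(M)$; the exponential map identifies $U$ with the total space $E(\nu(f))$. Using the isomorphism $\Xi:\nu(f)\cong\kappa^*(\xi)$ together with the canonical bundle map $\kappa^*(\xi)\to\xi$ covering $\kappa$, I obtain a bundle map $E(\nu(f))\to E(\xi)$. Collapsing the complement of $U$ in $S^{n+k}=\R^{n+k}\cup\{\infty\}$ to the basepoint of $T(\xi)$ yields a map $S^{n+k}\to T(\xi)$ whose homotopy class I assign to $(f,\kappa,\Xi)$. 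In the other direction, given $g:S^{n+k}\to T(\xi)$, I homotope it to be smooth and transverse to the zero section $B\subset T(\xi)$, set $M:=g^{-1}(B)$, and let $\kappa:=g|_M$; transversality supplies a canonical isomorphism $\Xi:\nu(M,\R^{n+k})\cong\kappa^*(\xi)$ via the derivative of $g$ on normal directions.

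To see that these two assignments are mutually inverse and well-defined on cobordism classes, I would observe that a cobordism $(\mathcal F,\kappa,\Xi)$ in the sense of Definition \ref{bordant-triples} produces, by the same tubular-neighborhood-and-collapse recipe applied to $\R^{n+k}\times I$, an explicit homotopy between the Pontryagin--Thom maps of the two ends; conversely, a homotopy $G:S^{n+k}\times I\to T(\xi)$ made transverse to $B$ yields a cobordism with normal $\xi$-structure by taking $G^{-1}(B)$.

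The main obstacle, such as it is, is bookkeeping rather than a real mathematical difficulty: one has to check that the fiberwise isomorphism viewpoint of Remark \ref{xi-structure-as-cd} intertwines correctly with the collapse map, and that the transversality and tubular neighborhood constructions can be done compatibly for cobordisms (relative to the boundary). Since all of this is classical and the authors explicitly cite \cite{pontr}, \cite{Thom}, and \cite[Exercise 7.2.5]{Hirsch-book}, I would keep the exposition brief and refer to those sources for the routine compatibility checks.
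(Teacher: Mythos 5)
Your proposal is correct and follows essentially the same route as the paper, which derives the statement directly from the Whitney embedding theorem (general position in codimension $k\geqslant n+2$, applied also to the $(n+1)$-dimensional cobordisms) together with the Pontryagin--Thom construction adapted to normal $(\xi,B)$-structures, citing \cite{pontr}, \cite{Thom} and \cite[Exercise 7.2.5]{Hirsch-book} for the routine verifications. You have merely spelled out the standard details that the paper leaves to these references.
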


Together with Hirsch theorem
 \cite[Theorem 6.4]{Hirsch-paper} this
gives (see \cite[Theorem~1]{Wells}, 
\cite[Theorem 1.1]{K-S}):

\begin{proposition}\label{Thom-space}
Let $B$ be a finite complex.
Then
there is an isomorphism
$Imm^{\xi ,B} (n,k)\cong \Pi _{n+k} (T(\xi ))$.
\end{proposition}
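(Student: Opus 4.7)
The plan is to combine Statement \ref{emdedding-case} with Hirsch's immersion theorem via iterated suspension. Suspending an immersion $f$ in $\R^{n+k}$ to $\R^{n+k+1}$ induces a homomorphism
$$E : Imm^{\xi, B}(n,k) \to Imm^{\xi \oplus \varepsilon, B}(n, k+1), \quad (f,\kappa,\Xi) \mapsto (Ef,\kappa,\Xi \oplus \id),$$
which under the Pontryagin-Thom construction corresponds to the ordinary suspension of Thom spaces via the canonical identification $T(\xi \oplus \varepsilon) = \Sigma T(\xi)$.

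Iterating $E$ a total of $N$ times with $N \geq n+2-k$ brings us into the codimension range $k+N \geq n+2$ where Statement \ref{emdedding-case} applies and gives
$$Imm^{\xi \oplus N \varepsilon, B}(n, k+N) \cong \pi_{n+k+N}(T(\xi \oplus N\varepsilon)) \cong \pi_{n+k+N}(\Sigma^N T(\xi)).$$
Since $B$ is a finite complex, $T(\xi)$ has finite skeleta, and by the Freudenthal suspension theorem the right-hand side stabilizes to $\Pi_{n+k}(T(\xi))$ once $N$ is sufficiently large. It therefore suffices to show that the suspension homomorphism $E$ is itself an isomorphism (for $k \geq 1$).

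For surjectivity of $E$: given an immersion $g: M^n \looparrowright \R^{n+k+1}$ with normal $(\xi \oplus \varepsilon, B)$-structure, the trivial summand of the splitting $\nu(g) \cong \kappa^*\xi \oplus \varepsilon$ distinguishes a nowhere-vanishing normal line field $v$ on $M$. I would apply Hirsch's theorem to produce a regular homotopy of $g$ that rotates $v$ everywhere to the fixed coordinate direction $e_{n+k+1}$; after the deformation the image of $g$ lies in the hyperplane $\R^{n+k} \times \{0\}$ and is visibly a suspension. The regular homotopy is then upgraded to a normal $(\xi \oplus \varepsilon, B)$-cobordism by the trace (mapping cylinder) construction. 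Injectivity is handled by the same rotation argument applied to a given $(\xi\oplus\varepsilon,B)$-cobordism, carried out rel boundary.

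The principal obstacle I anticipate is this last step: performing the Hirsch-type rotation of $v$ compatibly with the given normal $\xi$-structure, and carrying it out on a cobordism rel boundary without introducing self-intersections or destroying the bundle isomorphism $\Xi$. Finiteness of $B$ enters essentially here, allowing cellular approximation of $\kappa$ and reducing the parametric Hirsch argument to finite subcomplexes of $B$. Once the suspension-isomorphism step is in place, the iterated diagram above yields the desired identification $Imm^{\xi, B}(n,k) \cong \Pi_{n+k}(T(\xi))$.
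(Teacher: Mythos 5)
Your proposal is correct and follows essentially the same route as the paper, which simply combines Statement \ref{emdedding-case} with Hirsch's theorem \cite[Theorem 6.4]{Hirsch-paper} (citing \cite{Wells}, \cite{K-S}); your suspension-isomorphism-via-compression argument is exactly the content of those references. The details you flag (rotating the distinguished normal line field and spreading the $\xi$-structure along the regular homotopy, as in Statement \ref{simple-changes}) are the standard ones and pose no real obstacle.
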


\subsection{Important particular cases}

We have given general definition of immersion
with normal $(\xi ,B)$-structure.
Let us enumerate
several particular cases which will
be investigated in the present work
(see \cite{A-E}, \cite{A-F}).

\begin{enumerate}
\item
For $B=pt $, $\xi = k\varepsilon $ the group
$Imm^{k\varepsilon , pt }  (n,k) $
is denoted by
$Imm^{fr(k)} (n,k) $;
we call it the group of
framed immersions.
Elements of this group are
represented by pairs of the form
 $(f : M^n\looparrowright \R^{n+k}, \Xi : \nu (f) \cong k\varepsilon )$.
The Pontryagin-Thom construction
together with Hirsch results
provides for $k\geqslant 1$
an isomorphism
$$
Imm^{fr(k)} (n,k) \cong \Pi_{n} .
$$
\item
For $B=\RP^N  $, $\xi = k\gamma $ the group
$Imm^{k\gamma , \RP^N }  (n,k) $ 
will be denoted by $Imm^{sf(k) , \RP^N} (n,k)$,
we call it the group of {\it skew-framed} 
$\RP^N$-controlled
immersions. 
For $N=\infty $ 
this group is denoted by $Imm^{sf(k)} (n,k)$,
and we speak simply of the 
skew-framed immersions
group.
The chain of standard inclusions
$\RP^0\subset \RP^1 \subset \ldots \subset \RP^\infty $
induces a chain of homomorphisms
(by the cell approximation theorem, 
the first map in the second row is surjective, and
all other maps are bijective)
$$
Imm^{sf(k),\RP^0} (n,k) 
= Imm^{fr(k)} (n,k)
\to 
Imm^{sf(k),\RP^1} (n,k) 
\to 
\ldots
\to
$$
\begin{equation}\label{control}
Imm^{sf(k),\RP^{n}} (n,k) 
\twoheadrightarrow
Imm^{sf(k),\RP^{n+1}} (n,k) 
\cong
\ldots
\cong
Imm^{sf(k)} (n,k) .
\end{equation}

Statement
 \ref{Thom-space}
and
\cite[Theorem 15.1.8]{Husemoller} 
provide the isomorphisms
$$
Imm^{sf(k) , \RP^N } (n,k) \cong \Pi_{n+k} (\RP^{k+N} _k) ,
\quad
Imm^{sf(k)} (n,k) \cong \Pi_{n+k} (\RP^\infty _k) .
$$
\item
For $B=\RP^N $, 
$\xi = k_1\gamma \oplus k_2\varepsilon $,
$ k_2 \geqslant 1$ 
we have a suspension isomorphism
\begin{equation}\label{suspension}
E: Imm^{sf(k_1)\times fr(k_2-1) , \RP^N} (n,k_1+k_2-1) 
\to 
Imm^{sf(k_1)\times fr(k_2) , \RP^N } (n,k_1+k_2) ,
\end{equation}
for $k_1+k_2 \geqslant 2$ it is an isomorphism 
by \cite[Theorem 6.4]{Hirsch-paper}.
The group
$Imm^{sf(k_1)\times fr(k_2) , \RP^\infty } (n,k_1+k_2)$
is denoted shortly by
$Imm^{sf(k_1)\times fr(k_2)  } (n,k_1+k_2)$.
\end{enumerate}

Injection of
the structure groups
 $\{0  \} \subset \Z /2 $
induces a homomorphism
\begin{equation}\label{t-skashivanie}
Imm^{fr(k)} (n,k) \to Imm^{sf(k)} (n,k) 
\end{equation}

\begin{example}\label{sf-zero}
The map $\iota : Imm^{fr(k)} (n,k) \to Imm^{fr(k)\times sf(0)} (n,k) $ such that $(f,\Xi ) \mapsto (f,\const , \Xi )$
is a monomorphism.

The map $r: Imm^{fr(k)\times sf(0)} (n,k) \to Imm^{fr(k)} (n,k) $ such that $(f,\kappa , \Xi : \nu (f) \cong \kappa ^{*} (k\varepsilon \oplus 0\gamma )) 
\mapsto (f, \Psi : \nu (f) \stackrel{\Xi }{\cong }\kappa ^{*} (k\varepsilon ) \cong k\varepsilon )$
is an epimorphism.

The composition $r \circ \iota : Imm^{fr(k)} (n,k) \to Imm^{fr(k)} (n,k)$ is the identity map.
Therefore
the group $Imm^{fr(k)} (n,k) $ 
is a direct summand of the group
$Imm^{fr(k)\times sf(0)} (n,k) $.
\end{example}

\subsection{Simple modifications of the triples $(f,\kappa , \Xi )$}

In this section we describe several simple
procedures which will be applied in the proofs below.
The reader can verify their properties himself or herself.

\begin{proposition}\label{simple-changes}
For arbitrary $B$, $\xi $ we have:
\\
1) 
for any 
diffeomorphism $\varphi : M\to M$ the
$(\xi ,B)$-immersions
$(f: M^n \looparrowright \R^{n+k},\kappa , \Xi )$ and
$(f\circ \varphi ,
\kappa \circ \varphi ,
\varphi ^{*}( \Xi ) \circ \varphi '   )$
are cobordant 
[concerning $\varphi '$ see Remark
\ref{diff-and-imm}].
\\
2) Suppose
$(f,\kappa , \Xi _0)$ is a
$(\xi ,B)$-immersion, and
 $\{ \Xi _t \}$, $t\in I$ is a family of bundle isomorphisms $\nu (f) \cong \kappa ^{*} (\xi )$.
Then the $(\xi ,B)$-immersions
$(f,\kappa , \Xi _0)$ and
$(f ,\kappa , \Xi _1)$ 
are cobordant.
\\
3)
Let $H: M\times I \to \R^N\times I$ be a regular homotopy
of immersions $f_0\simeq f_1$ such that
$f_t=f_0$ for $t\in [0,\frac{1}{10}] $,
and  $f_t=f_1$ for $t\in [\frac{9}{10},1] $.
Suppose $\kappa : M\to B $ 
is  a continuous map 
and 
$\Xi _0 : \nu (H)|_{M\times \{ 0 \}} = \nu (f_0) \cong \kappa ^{*} ( \xi )$ is an isomorphism.
Let
$\Xi : \nu (H) \cong (\kappa \circ p)^{*}( \xi )$ be any isomorphism extending the isomorphism $\Xi _0$;
here $p:M\times I \to M$ is the projection map.
Let $\Xi _1 = \Xi |_{M\times \{1 \} }  : \nu(H)|_{M\times \{1\}} = \nu (f_1)
\cong \kappa ^{*} ( \xi )$ be ``the resulting'' 
isomorphism.
Then the $(\xi , B)$-immersions
$(f_0,\kappa , \Xi _0)$ and
$(f_1,\kappa , \Xi _1)$ 
are cobordant.
\end{proposition}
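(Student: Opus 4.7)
The plan for all three parts is to construct an explicit cobordism on the cylinder $\mathcal M=M\times I$ (or $M_0\times I$, $M_1\times I$ respectively), and to observe that the data at the two boundary components yield the required $(\xi,B)$-immersions. In each case one uses the collar structure near $\partial \mathcal M$ to arrange orthogonality to $\R^{n+k}\times \partial I$.

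For part 1, take $\mathcal M = M\times I$ with the product immersion $\mathcal F(x,t)=(f(x),t)$, identify $M_0$ with $M\times\{0\}$ via $\id$, and identify $M_1$ with $M\times\{1\}$ via the diffeomorphism $\varphi$. Define $\kappa_{\mathcal M}=\kappa\circ\operatorname{pr}_M$ and $\Xi_{\mathcal M}=\operatorname{pr}_M^{*}(\Xi)$. Restricting to $t=0$ yields $(f,\kappa,\Xi)$, while restricting to $t=1$ and composing with the identification $\varphi$ produces exactly $(f\circ\varphi,\kappa\circ\varphi,\varphi^{*}(\Xi)\circ\varphi')$ thanks to the definition of $\varphi'$ in Remark \ref{diff-and-imm}. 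The main subtlety is the bookkeeping with $\varphi'$; one checks it unwinds to the natural identification $\operatorname{pr}_M^{*}(\nu f)\big|_{\varphi(M)}\cong \nu(f\circ\varphi)$.

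For part 2, use the same cylinder $\mathcal M=M\times I$ and same product immersion $\mathcal F(x,t)=(f(x),t)$, with $\kappa_{\mathcal M}=\kappa\circ\operatorname{pr}_M$. The normal bundle $\nu(\mathcal F)$ is canonically $\operatorname{pr}_M^{*}\nu(f)$, and the family $\{\Xi_t\}_{t\in I}$ assembles into a bundle isomorphism
$$
\operatorname{pr}_M^{*}\nu(f)\longrightarrow \operatorname{pr}_M^{*}\kappa^{*}(\xi)=\kappa_{\mathcal M}^{*}(\xi)
$$
(one may need a smoothing near $t=0,1$ to make the family constant on collars, which does not change cobordism class). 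Its restrictions at $t=0,1$ are $\Xi_0,\Xi_1$, giving the desired cobordism.

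For part 3, take $\mathcal F = H: M\times I\looparrowright \R^{n+k}\times I$; since $H(x,t)=(f_t(x),t)$ with $f_t$ constant on $[0,\tfrac1{10}]$ and $[\tfrac9{10},1]$, $H$ is a genuine immersion orthogonal to the boundary slabs. Put $\kappa_{\mathcal M}=\kappa\circ p$ and use precisely the extension $\Xi:\nu(H)\cong(\kappa\circ p)^{*}\xi$ given in the hypothesis. At $t=0$ this is $\Xi_0$ by assumption, at $t=1$ it is $\Xi_1$ by definition, so $(\mathcal F,\kappa_{\mathcal M},\Xi)$ is the required cobordism. The potentially delicate point is the existence of the extension $\Xi$, but this is granted in the statement; what one must only verify is that restriction on the collar near $t=0$ agrees with the pullback of $\Xi_0$, which is immediate because both $f_t$ and $\kappa\circ p$ are independent of $t$ there.

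The only real obstacle across the three items is the careful identification of normal bundles along the cylinder and the role of $\varphi'$ in part 1; once this is made explicit, each statement reduces to exhibiting the cylinder cobordism and reading off the boundary data.
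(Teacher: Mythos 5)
Your proposal is correct: the paper deliberately leaves Statement \ref{simple-changes} to the reader (``the reader can verify their properties himself or herself''), and the intended verification is exactly the cylinder construction you give — the product immersion on $M\times I$ with boundary identification via $\id\sqcup\varphi$ for part 1, the assembled family $\{\Xi_t\}$ over the cylinder for part 2, and the regular homotopy $H$ itself (constant near $t=0,1$, hence orthogonal to the boundary) with the granted extension $\Xi$ for part 3. Your bookkeeping with $\varphi'$ and the collar constancy matches Definition \ref{bordant-triples} and Remark \ref{diff-and-imm}, so nothing further is needed.
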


Now let us consider the case
$B=\RP^N $, $\xi = k_1\gamma \oplus k_2\varepsilon $.
In this situation, we can 
 ``spread out'' the $\xi $-structure
following a homotopy of the map $\kappa $.
Such spreading generalizes
the procedure described by Pontryagin for
framed embeddings.

So, let a $(k_1\gamma \oplus k_2\varepsilon  ,\RP^N )$-immersion
$(f,\kappa _0,\Xi _0)$ and a homotopy $\{ \kappa _t \}$ be given.

We describe a method to construct 
the corresponding family $\{ \Xi _t : \nu (f) \cong \kappa _t^{*} (k_1\gamma \oplus k_2\varepsilon  ) \} $.

In accordance with Remark
\ref{xi-structure-as-cd}
replace the isomorphism $\Xi _0:\nu(f) \cong \kappa _0^{*} (\xi )$
by the diagram
$$
\begin{CD}
E(\nu (f))  @> \Psi _0 >>  E(\xi = k_1\gamma \oplus k_2\varepsilon  ) \\
@V \pi VV @VVV \\
M @> \kappa _0 >> \RP^N
\end{CD}
$$
where $\Psi _0$ is a fiberwise isomorphism.

Let us define the isomorphism $\Psi _t$ in the diagram
$$
\begin{CD}
E(\nu (f))  @> \Psi _t >>  E( k_1\gamma \oplus k_2\varepsilon  ) \\
@V\pi VV @VVV \\
M @> \kappa _t >> \RP^N
\end{CD}
$$
Take an arbitrary point $X\in E(\nu(f))$.
If
$$
\Psi _0 (X) = 
[ \kappa _0 (\pi (X)); \lambda _1 (X) ,\ldots , \lambda _{k_1 } (X) , \mu _1 (X) ,\ldots , \mu _{k_2} (X) ]
$$
then put
$$
\Psi _t (X) =
[ \kappa _t (\pi ( X)) ; \lambda _1 (X),\ldots , \lambda _{k_1 } (X), \mu _1 (X),\ldots , \mu _{k_2} (X)].
$$
Denote  by $\Xi _t$
the corresponding isomorphism 
$\nu (f) \cong \kappa _t ^{*} (k_1\gamma \oplus k_2 \varepsilon )$.

\begin{proposition}\label{kappa-homotopy}
Suppose $B=\RP^N $, $\xi = k_1\gamma \oplus k_2\varepsilon $. Then
the above construction provides a
 $(k_1\gamma \oplus k_2\varepsilon  ,\RP^N )$-immersion 
 $(f,\kappa _1,\Xi _1)$
 which is
 cobordant to the original
immersion $(f,\kappa _0,\Xi _0)$.
\end{proposition}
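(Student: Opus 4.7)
The plan is to realise the described spreading procedure geometrically as a cylindrical cobordism. Set $\mathcal M = M \times I$ and let $\mathcal F: \mathcal M \looparrowright \R^{n+k} \times I$ be $\mathcal F(x,t) = (f(x), t)$. This immersion is automatically orthogonal to the strip boundary $\R^{n+k} \times \partial I$, and its restriction to $M \times \{s\}$ equals $f$ for $s = 0, 1$, which gives conditions (1)--(3) of Definition \ref{bordant-triples}. Define $\kappa: \mathcal M \to \RP^N$ by $\kappa(x,t) = \kappa_t(x)$; then $\kappa|_{M \times \{s\}} = \kappa_s$, giving condition (4).

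For condition (5) I would use the projection $p: M \times I \to M$ together with the natural isomorphism $\nu(\mathcal F) \cong p^{*}(\nu(f))$ coming from the product structure. The family of fibrewise isomorphisms $\{\Psi_t : E(\nu(f)) \to E(k_1\gamma \oplus k_2\varepsilon)\}$ defined in the construction preceding the proposition is continuous in $t$, so it assembles into a single fibrewise isomorphism $\Psi : E(\nu(\mathcal F)) \to E(k_1\gamma \oplus k_2\varepsilon)$ covering $\kappa$ via $\Psi(X,t) = \Psi_t(X)$. Converting to pullback form, as in Remark \ref{xi-structure-as-cd}, produces the required bundle isomorphism $\Xi : \nu(\mathcal F) \cong \kappa^{*}(k_1\gamma \oplus k_2\varepsilon)$. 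By construction $\Xi|_{M \times \{0\}} = \Xi_0$, while $\Xi|_{M \times \{1\}}$ agrees with the $\Xi_1$ output by the spreading procedure, so the triple $(\mathcal F, \kappa, \Xi)$ realises $\partial (\mathcal F, \kappa, \Xi) = (f, \kappa_0, \Xi_0) \sqcup (f, \kappa_1, \Xi_1)$.

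The only genuinely technical point is checking that the formula $\Psi_t(X) = [\kappa_t(\pi(X)); \lambda_1(X), \ldots, \mu_{k_2}(X)]$ defines a fibrewise \emph{isomorphism of vector bundles} rather than merely a set-theoretic map, because the coordinates $\lambda_i$ depend on an implicit choice of lift of $\kappa_t(\pi(X))$ to $S^N$ through the identification $(x; \lambda, \mu) \sim (-x; -\lambda, \mu)$. I would handle this locally: over a contractible neighbourhood $U \subset M$ pick a lift $\tilde\kappa_0 : U \to S^N$ of $\kappa_0|_U$; by the homotopy lifting property of the covering $\rho$, the homotopy $\kappa_t|_U$ lifts uniquely to a continuous family $\tilde\kappa_t : U \to S^N$ extending $\tilde\kappa_0$, and using $\tilde\kappa_t$ to read off the entries of $\Psi_t(X)$ gives the same numbers $\lambda_i(X), \mu_j(X)$ as $\Psi_0(X)$ does via $\tilde\kappa_0$. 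A different choice of local lift changes both the lift of $\kappa_t$ and the signs of the $\lambda_i$ in a consistent way, so $\Psi_t$ agrees on overlaps and is globally well defined. Fibrewise linearity and continuity of $\Psi$ then follow at once from those of $\Psi_0$ and continuity of the homotopy $\{\kappa_t\}$, completing the argument.
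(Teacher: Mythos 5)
Your proof is correct and follows exactly the route the paper intends: the paper leaves Statement \ref{kappa-homotopy} to the reader, and its explicit cobordism in the proof of Statement \ref{reflection-all-sf} is precisely your cylinder $(\mathcal F = f\times\id_I,\ \kappa(x,t)=\kappa_t(x),\ \Xi$ assembled from the $\Psi_t$). Your extra care about well-definedness of $\Psi_t$ via local lifts of $\kappa_t$ through $\rho:S^N\to\RP^N$ correctly settles the only subtle point in the construction.
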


\begin{example}\label{simple-sphere}
For the standard embedding $\mathbb E :S^3 \subset \R^4$,
the unit exterior normal vector
gives a trivialization
$\Phi : \nu (\mathbb E) \cong c^{*} (\gamma )$,
where $c:S^3 \to c\in \RP^\infty $ is the constant map.

The pair $(\mathbb E , \Phi )$ 
represents zero element of the group
$Imm^{fr(1)} (3,1)$;
the triple $(\mathbb E , c ,\Phi )$
represents the zero element both in
$Imm^{sf(1),\RP^3} (3,1)$
and in $Imm^{sf(1)}(3,1)$.
\end{example}

\begin{example}\label{replace-c-by-covering}
To continue Example \ref{simple-sphere},
take a homotopy $H : S^3 \times I \to \RP^\infty $
between the maps $c$ and
$S^3 \stackrel{\rho }{\to } \RP^3 \subset \RP^\infty $;
here $\rho $ is the standard 2-covering.
Statement
\ref{kappa-homotopy}
uses 
the isomorphism
$\Phi : \nu (\mathbb E) \cong c^{*}(\gamma )$
to construct the isomorphism
$\Xi _{\mathbb E }: \nu (\mathbb E ) 
\cong \mathcal \rho ^{*} (\gamma )$.

In the group $Imm^{sf(1)} (3,1)$ the triple
$(\mathbb E ,\rho  , \Xi _{\mathbb E})$ 
is cobordant to the original triple $(\mathbb E , c , \Phi )$,
that is, it represents the zero element.

But since the image of the homotopy $H$ is not contained in the subspace $\RP^3\subset \RP^\infty $,
we can not assert that the triple
$(\mathbb E ,\rho  , \Xi _{\mathbb E})$ 
is cobordant to the original triple
$(\mathbb E , c , \Phi )$ in the group
$Imm^{sf(1), \RP^3} (3,1)$.
It turns out that the triple
$(\mathbb E ,\rho  , \Xi _{\mathbb E})$ represents  a non-zero element in the group
$Imm^{sf(1), \RP^3} (3,1)$
(see Remark \ref{E-5}).
\end{example}

\begin{remark}\label{perevorot-1}
Formally, the triple $(\mathbb E ,\rho  , \Xi _{\mathbb E})$
defined in Example
\ref{replace-c-by-covering}
depends on the choice of the homotopy
$H$.
But it turns out that its cobordism class
does depend on the choice of homotopy.
In fact, 
since $S^3$ is simply connected,
for any choice of homotopy 
$H$ 
the resulting triple will be cobordant
either to the triple $(\mathbb E ,\rho  , \Xi _{\mathbb E})$ or to the triple
$(\mathbb E ,\rho  , - \Xi _{\mathbb E})$, where
 $-\Xi _{\mathbb E}$ is the trivialization obtained
 from the trivialization
$\Xi _{\mathbb E}$ by multiplication
by $-1$ in each fiber.
The triples
$(\mathbb E ,\rho  , \Xi _{\mathbb E})$ and
$(\mathbb E ,\rho  , - \Xi _{\mathbb E})$
are cobordant in the group
$Imm^{sf(1), \RP^3} (3,1)$ by
Statement
 \ref{reflection-all-sf}.
\end{remark}

\section{Some geometric constructions}

\subsection{Construction of inverse element}

Let $R: \R^d \to \R^d $ be the reflection in the
hyperplane
$x_{d} =0$.

For an arbitrary immersion $F:M\looparrowright \R^d$
define
an isomorphism $R_{\nu(F) }$ 
in a natural way:
$$
\begin{CD}
\nu(F) @> R_{\nu (F)  } >> \nu (R\circ F) \\
@VVV @VVV \\
M @> \id >> M
\end{CD}
$$

If we are also given an isomorphism $\Xi : \nu (F) \cong  \kappa ^{*}( 
k_1 \gamma \oplus k_2  \varepsilon ) $,
then we can define
the composite isomorphism
$
\Xi \circ (R_{\nu(F)})^{-1}:
\nu(R\circ F)\stackrel{(R_{\nu (F) })^{-1}}{\to  }
\nu (F) \stackrel{\Xi }{\to }
\kappa ^{*}(k_1\gamma \oplus k_2\varepsilon ) $.

Then, the following simple assertion holds.

\begin{proposition}\label{reflection}
For $k_2\geqslant 1$ and $N\in \mathbb N\cup \{ \infty \}$
the triples
$(F,\kappa ,\Xi )$ and
$( R\circ F,
\kappa , 
\Xi \circ (R_{\nu (F) })^{-1} )  $
represent
mutually inverse elements of the group 
   $ Imm ^{sf (k_1) \times fr (k_2), \RP^N} (n, k_1+k_2)$.
\end{proposition}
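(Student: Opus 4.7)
The plan is to show that the sum of the two triples is the zero element of $Imm^{sf(k_1)\times fr(k_2),\RP^N}(n,k_1+k_2)$ by exhibiting an explicit null-cobordism of their disjoint union. The classical geometric idea is a \emph{horseshoe} (half-turn) of $M\times[0,\pi]$ inside $\R^{n+k}\times I$, arranged so that both endpoints land in the slice $\R^{n+k}\times\{0\}$, one carrying (a translate of) $F$ and the other carrying (a translate of) $R\circ F$.

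Concretely, let $R_\theta\colon \R^{n+k+1}\to\R^{n+k+1}$ denote the rotation by angle $\theta$ in the plane spanned by $e_{n+k}$ (the direction flipped by $R$) and the cobordism coordinate $e_{n+k+1}$. After translating $F$ by $re_{n+k}$ for $r$ large enough that the last coordinate of $F(M)+re_{n+k}$ is positive, and rescaling the cobordism coordinate so that everything lands in $\R^{n+k}\times I$, I would define
$$
\mathcal F\colon M\times[0,\pi]\looparrowright\R^{n+k}\times I,\qquad \mathcal F(x,\theta)=R_\theta\bigl(F(x)+re_{n+k},0\bigr).
$$
A direct computation shows that $\mathcal F$ is an immersion, that the restriction of $R_\theta$ gives an isomorphism $\nu(F)|_x\cong\nu(\mathcal F)|_{(x,\theta)}$ for each $(x,\theta)$, and that $\mathcal F$ meets $\R^{n+k}\times\partial I$ orthogonally along $\partial\mathcal M$. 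The boundary at $\theta=0$ is the translate $F+re_{n+k}$; at $\theta=\pi$, because $R_\pi$ restricted to $\R^{n+k}\times\{0\}$ is precisely the reflection $R$, the boundary is $R\circ F-re_{n+k}$. For $r$ large, these two boundary components are disjoint in the slice $\R^{n+k}\times\{0\}$.

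Equip this cobordism with $\tilde\kappa(x,\theta)=\kappa(x)$ and
$$
\tilde\Xi_{(x,\theta)}=\Xi_x\circ\bigl(R_\theta|_{\nu(F)|_x}\bigr)^{-1},
$$
a smooth family of bundle isomorphisms $\nu(\mathcal F)\cong\tilde\kappa^*(k_1\gamma\oplus k_2\varepsilon)$. At $\theta=0$ this specializes to $\Xi$, and at $\theta=\pi$, because $R_\pi|_{\nu(F)}$ agrees by construction with the map $R_{\nu(F)}$ of the Proposition, it specializes to $\Xi\circ(R_{\nu(F)})^{-1}$. Attaching trivial translation-cylinders along the two boundary components (cf.\ Statement~\ref{simple-changes}) absorbs the shifts $\pm re_{n+k}$ and yields a genuine null-cobordism of $(F,\kappa,\Xi)\sqcup(R\circ F,\kappa,\Xi\circ R_{\nu(F)}^{-1})$, proving the two triples are inverse to each other. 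The only delicate step is identifying $R_\pi|_{\nu(F)}$ with the normal-bundle map $R_{\nu(F)}$ defined at the start of this section; this is however immediate, since both are the restriction to $\nu(F)$ of the ambient reflection $R$ acting on $\R^{n+k}\times\{0\}$.
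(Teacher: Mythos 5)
Your construction is correct: the half-turn (horseshoe) cobordism $M\times[0,\pi]\looparrowright\R^{n+k}\times I$, with the normal structure transported along by the rotations $R_\theta$ and the shifts absorbed by translation cylinders, is exactly the standard argument, and the paper itself gives no proof of Statement \ref{reflection}, presenting it as a simple assertion. Note in passing that your argument nowhere uses $k_2\geqslant 1$ or the specific form $k_1\gamma\oplus k_2\varepsilon$ of the bundle --- it establishes the analogous inverse-element statement for an arbitrary $(\xi,B)$-structure; the hypothesis $k_2\geqslant 1$ becomes essential only in the subsequent Statement \ref{reflection-2}, where the image is first compressed into a hyperplane.
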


Let $k_2 \geqslant 1$;
define by $\mathfrak r $ 
the isomorphism of the bundle
$k_1 \gamma \oplus k_2 \varepsilon $ over $\RP^N$
which 
overturns the last summand $\varepsilon $:
$$
\xymatrix{
[x; \lambda _1, \ldots , \lambda _{k_1}, \mu _1 , \ldots , \mu _{k_2}]  \ar[rr] \ar[dr] & &
[x; \lambda _1, \ldots , \lambda _{k_1}, \mu _1 , \ldots , -\mu _{k_2}] 
  \ar[dl]  \\
& [x] & 
}
$$

\begin{proposition}\label{reflection-2}
For $k_2\geqslant 1$ and $N\in \mathbb N\cup \{ \infty \}$
the triples
$(F,\kappa ,\Xi )$ and
$(F,
\kappa , 
 \Psi  )  $,
where
 $\Psi $ is the composition
$$
 \nu (F) \stackrel{\Xi }{\cong }\kappa ^{*}
 ( k_1 \gamma \oplus k_2 \varepsilon ) 
\stackrel{\kappa^{*} (\mathfrak r )}{\cong }
\kappa ^{*}( k_1 \gamma \oplus k_2 \varepsilon ) ,
$$ 
represent mutually inverse elements
of the group
$ Imm ^{sf (k_1) \times fr (k_2),\RP^N } (n, k_1+k_2)$.
\end{proposition}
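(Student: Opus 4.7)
The plan is to reduce Proposition~\ref{reflection-2} to Proposition~\ref{reflection} via the suspension isomorphism~(\ref{suspension}): on a suspended representative, the involution $\mathfrak r$ of the framing target $k_1\gamma\oplus k_2\varepsilon$ will coincide with the reflection-induced twist $R_{\nu(\cdot)}^{-1}$ of Proposition~\ref{reflection}, where $R$ is the reflection in the hyperplane orthogonal to the suspension direction.

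Assume first $k_1+k_2\geqslant 2$, so that~(\ref{suspension}) is an isomorphism. I would then fix a triple $(F'\colon M'\looparrowright\R^{n+k_1+k_2-1},\kappa',\Xi'\colon\nu(F')\cong{\kappa'}^{*}(k_1\gamma\oplus(k_2-1)\varepsilon))$ together with a cobordism $(\mathcal F,\mathcal K,\mathcal\Xi)$ in $Imm^{sf(k_1)\times fr(k_2),\RP^N}(n,k_1+k_2)$ connecting $(F,\kappa,\Xi)$ to the suspended representative $(EF',\kappa',\Xi'\oplus 1)$, where the ``$\oplus 1$'' summand is the canonical trivialization along the new ambient direction $e_{n+k_1+k_2}$. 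Post-composing the cobordism framing $\mathcal\Xi$ with the globally defined bundle automorphism $\mathcal K^{*}(\mathfrak r)$ yields a fresh cobordism from $(F,\kappa,\Psi)$ to $(EF',\kappa',\mathfrak r\circ(\Xi'\oplus 1))=(EF',\kappa',\Xi'\oplus(-1))$.

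Next, take $R$ to be the reflection in the hyperplane $x_{n+k_1+k_2}=0$; since the image of $EF'$ lies in this hyperplane, $R\circ EF'=EF'$. Under the splitting $\nu(EF')\cong\nu(F')\oplus\R e_{n+k_1+k_2}$ the isomorphism $R_{\nu(EF')}$ acts as the identity on the first summand and as $-\id$ on the second, so a direct computation establishes the key identity
\[
(\Xi'\oplus 1)\circ R_{\nu(EF')}^{-1} \;=\; \Xi'\oplus(-1).
\]
By Proposition~\ref{reflection} the triple $(EF',\kappa',\Xi'\oplus(-1))$ therefore represents the inverse of $(EF',\kappa',\Xi'\oplus 1)$, and chaining the cobordisms above gives $(F,\kappa,\Psi)=-(F,\kappa,\Xi)$ in the group.

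The main subtlety is confirming that the cobordism supplied by~(\ref{suspension}) remains valid after post-composition of its framing with $\mathfrak r$; this is automatic because $\mathfrak r$ is a globally defined bundle automorphism of $k_1\gamma\oplus k_2\varepsilon$ over $\RP^N$. The residual case $k_1+k_2=1$ forces $k_1=0$, $k_2=1$ and reduces to the classical assertion that reversal of a normal $1$-framing gives the inverse element in $Imm^{fr(1)}(n,1)\cong\Pi_n$; alternatively, one additional suspension brings this case into the range already treated.
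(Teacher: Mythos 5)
Your proof is correct and follows essentially the same route as the paper: there, Hirsch's theorem is used to regularly homotope $F$ into the hyperplane $x_{n+k_1+k_2}=0$ (i.e.\ to realize the class as a suspension), after which Statement \ref{reflection} is applied with the reflection $R$ in that hyperplane, observing that $R\circ F=F$ and that the induced change of framing is exactly $\kappa^{*}(\mathfrak r)$. Your bookkeeping via a cobordism to a suspended representative, with $\mathfrak r$ transported through the cobordism, is the same argument in a slightly different packaging; your explicit treatment of the residual case $k_1+k_2=1$ is a reasonable supplement, since the compression step tacitly requires $k_1+k_2\geqslant 2$.
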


\begin{proof}
By \cite[Theorem 6.4]{Hirsch-paper},
we can apply regular homotopy and assume that
the image $F(M)$ lies in the hyperplane
$x_d=0$ of space $\R^{n + k_1+k_2}$.
(We assume that the map $\kappa $ is left unchanged,
and the isomorphism
$\Xi $ has changed as in item 3) of Statement \ref{simple-changes}; it is denoted by the same symbol $\Xi $. 
The cobordism class has not changed.)
Let $R:\R^{n + k_1+k_2}\to \R^{n + k_1+k_2}$ be the reflection in the hyperplane  $x_d=0$.
By Statement \ref{reflection}
the 
element represented by the
triple
$( R\circ F,
\kappa , 
 \Xi \circ (R_{\nu (F) })^{-1} )  $
is inverse to the original one. 
But  $R\circ F = F$ and $\Xi \circ (R_{\nu (F) })^{-1} 
= \Psi $.
\end{proof}

\subsection{
Antipodal transformation of a skew-framing keeps an element in the cobordism group unchanged
}

For arbitrary vector bundle $\xi $ 
over base space $B$
define the fiberwise antipodal involution.
It is the automorphism ${\Upsilon } _{\xi }$
such that 
${\Upsilon } _{\xi } (v) = -v$
for any vector $v\in F_{x}$ 
belonging to the fiber over $x\in B$.
[If the bundle under consideration is clear from the context,
then we will omit the symbol $\xi $.]
It is easily seen that for the map $\kappa :M\to B$
the equality holds
$
\kappa ^{*} \circ {\Upsilon } _{\xi } = {\Upsilon } _{\kappa ^{*} (\xi )} .
$

Statement \ref{reflection-2} implies
(compare \cite[p.206]{B-S})

\begin{proposition}\label{reflection-all-fr}
In the group $Imm^{fr(k)} (n,k)$
the pairs $(F, \Xi )$
and $(F,\Theta )$,
where 
$\Theta : \nu (F) \stackrel{\Xi }{\cong } k\varepsilon 
\stackrel{ \Upsilon   }{\cong } k\varepsilon  $,
represent:

for odd $k$, mutually inverse elements,

for even $k$, equal elements.
\end{proposition}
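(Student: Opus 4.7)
The plan is to reduce to Statement \ref{reflection-2} applied a single time, and then adjust by a bundle automorphism lying in the identity component $GL^+(k, \R)$. First, using the split monomorphism $\iota : Imm^{fr(k)}(n,k) \to Imm^{fr(k) \times sf(0)}(n,k)$ of Example \ref{sf-zero}, identify $(F, \Xi)$ with the triple $(F, c, \Xi)$, where $c : M \to \RP^\infty$ is the constant map. Since $\iota$ is split by the retraction $r$ of Example \ref{sf-zero}, it suffices to prove the corresponding statement about triples.

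Let $\mathfrak{r}_i$ denote the bundle automorphism of $k\varepsilon$ negating only the $i$-th coordinate, so that $\Upsilon = \mathfrak{r}_1 \circ \cdots \circ \mathfrak{r}_k$ (and in particular the single flip $\mathfrak{r}_k$ coincides with the $\mathfrak{r}$ of Statement \ref{reflection-2} in the case $k_1 = 0$, $k_2 = k$). Applying Statement \ref{reflection-2} gives that $(F, c, \mathfrak{r}_k \circ \Xi)$ represents the inverse of $(F, c, \Xi)$. To compare this with $(F, c, \Upsilon \circ \Xi)$ I use the following standard observation: any bundle automorphism $g$ of $k\varepsilon$ lying in the identity component $GL^+(k, \R)$ can be connected to $\id$ by a path, and composing $\Xi$ with this path produces a continuous family of isomorphisms, whence Statement \ref{simple-changes}(2) gives a cobordism between $(F, c, \Xi)$ and $(F, c, g \circ \Xi)$.

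Now split into cases according to the sign $\det \Upsilon = (-1)^k$. For even $k$, $\Upsilon \in GL^+(k, \R)$, hence $(F, c, \Upsilon \circ \Xi)$ is cobordant directly to $(F, c, \Xi)$, proving the ``equal elements'' assertion. For odd $k$, the composition $g := \mathfrak{r}_1 \circ \cdots \circ \mathfrak{r}_{k-1}$ satisfies $\det g = (-1)^{k-1} = 1$ and $g \circ \mathfrak{r}_k = \Upsilon$; therefore $(F, c, \Upsilon \circ \Xi)$ is cobordant to $(F, c, \mathfrak{r}_k \circ \Xi)$, which is the inverse of $(F, c, \Xi)$ by Statement \ref{reflection-2}. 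Applying the retraction $r$ transfers the conclusion back to $Imm^{fr(k)}(n,k)$.

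The main subtlety is confirming that a fiberwise path in $GL^+(k, \R)$ really produces a bundle-homotopy in the sense required by Statement \ref{simple-changes}(2); since $k\varepsilon$ is trivial one may simply take the path independent of the basepoint of $\RP^\infty$, so this is routine. The conceptual content of the proof is thus precisely the parity of the determinant of $\Upsilon$: for even $k$ the antipodal involution is isotopic to the identity in the fiber, while for odd $k$ it is isotopic to a single-coordinate reflection, and the latter is governed by Statement \ref{reflection-2}.
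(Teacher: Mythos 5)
Your argument is correct and follows exactly the route the paper intends: the paper states this result as an immediate consequence of Statement \ref{reflection-2}, and your write-up simply fills in the details of that implication (decomposing $\Upsilon$ into single-coordinate flips, absorbing the orientation-preserving part by a path in $GL^+(k,\mathbb R)$ via Statement \ref{simple-changes}(2), and reducing to the $\RP^\infty$-controlled setting through Example \ref{sf-zero}). Nothing is missing; the determinant-parity observation is precisely the content behind the odd/even dichotomy.
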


In this section we prove

\begin{proposition}\label{reflection-all-sf}
In the group
$Imm^{sf(k) } ( n , k)$ 
the triples
$(F, \kappa , \Xi )$ and
$(F, \kappa , \Theta )$, where
$\Theta : \nu (F) \stackrel{\Xi }{\cong } \kappa ^{*} (k\gamma ) 
\stackrel{\Upsilon }{\cong } \kappa ^{*} (k\gamma ) $,
represent equal elements.

For odd $n$, the same holds true
if these triples are considered as representatives
of the group $Imm^{sf(k),\RP^n} (n,k)$.
\end{proposition}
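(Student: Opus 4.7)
The strategy is to realize the passage from $\Xi$ to $\Theta=\Upsilon\circ\Xi$ by applying the spreading construction of Statement~\ref{kappa-homotopy} to a suitable homotopy $\{\kappa_t\}_{t\in I}$ with $\kappa_0=\kappa_1=\kappa$, chosen so that for every $m\in M$ the loop $t\mapsto\kappa_t(m)$ represents the nontrivial element of $\pi_1(\RP^N,\kappa(m))$.

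The first step is to verify that such a homotopy automatically produces $\Xi_1=\Theta$. Fix $m\in M$, pick a lift $\tilde x_0\in S^N$ of $\kappa_0(m)$, and lift the pointwise loop $t\mapsto\kappa_t(m)$ continuously to a path $\tilde x_t\in S^N$; by non-triviality, $\tilde x_1=-\tilde x_0$. If the fiber component of $\Psi_0$ over $m$ is written as $v\mapsto[\tilde x_0;\,\ell(v)]$ for some linear $\ell:(\nu F)_m\to\R^k$, then the spreading formula gives $\Psi_1(v)=[\tilde x_1;\,\ell(v)]=[-\tilde x_0;\,\ell(v)]=[\tilde x_0;\,-\ell(v)]$, where the last equality uses the identification~\eqref{points-of-total-space}. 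Hence $\Xi_1=\Upsilon\circ\Xi_0=\Theta$, and Statement~\ref{kappa-homotopy} delivers the required cobordism.

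The second step is to construct the homotopy. For $N=\infty$ I would use that $\RP^\infty$ carries an H-space structure (e.g.\ via the standard topological abelian group model of $K(\Z/2,1)$): picking a loop $\alpha:I\to\RP^\infty$ based at the identity $e$ and generating $\pi_1(\RP^\infty)$, the formula $\kappa_t(m)=\kappa(m)\cdot\alpha(t)$ gives a homotopy with $\kappa_0=\kappa_1=\kappa$ whose pointwise loop at each $m$ is the translate of $\alpha$, hence non-trivial. For $N=n$ odd, I would instead exploit the isotopy $-\id_{S^n}\simeq\id_{S^n}$: identifying $\R^{n+1}\cong\C^{(n+1)/2}$ and setting $h_t(z)=e^{\pi i t}z$ produces an isotopy that descends to $\bar h_t:\RP^n\to\RP^n$ with $\bar h_0=\bar h_1=\id$, and $\kappa_t:=\bar h_t\circ\kappa$ then has the desired property, since the pointwise loop lifts in $S^n$ to a path from $\tilde x_0$ to $-\tilde x_0$.

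The main obstacle is the bookkeeping in step one: the starting lift $\tilde x_0$ may not be chosen continuously in $m$ when $\kappa$ does not lift to $S^N$. The way out is to observe that the whole computation is invariant under the substitution $\tilde x_0\mapsto-\tilde x_0$ (which simultaneously flips $\ell$ to $-\ell$), so the pointwise identity $\Psi_1=\Upsilon\circ\Psi_0$ assembles into a globally defined bundle isomorphism independent of any choice of lift, completing the argument in both cases.
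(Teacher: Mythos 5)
Your argument is correct and, at its core, coincides with the paper's: the decisive ingredient in both is the rotation isotopy $h_t([z])=[e^{\pi i t}\cdot z]$ on an odd-dimensional projective space, whose pointwise loops represent the nontrivial element of $\pi_1$, combined with spreading the skew-framing along the homotopy as in Statement \ref{kappa-homotopy} (the paper does not cite that Statement but writes the resulting cobordism on $M\times I$ explicitly, using the bundle isotopy $L_t$ with $L_0=\id$, $L_1=\Upsilon_\gamma$; your path-lifting computation $\Psi_1=\Upsilon\circ\Psi_0$, together with the remark that the outcome is invariant under $\tilde x_0\mapsto -\tilde x_0$, is exactly the content of that construction). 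The only genuine divergence is the uncontrolled case: the paper first compresses $\kappa$ into $\RP^N$ with $N=n$ for odd $n$ and $N=n+1$ for even $n$ by cellular approximation and then applies the same rotation, whereas you translate by a generating loop using a multiplication on $K(\Z/2,1)$. Your route is viable and avoids the parity/compression step, but as phrased it needs a small repair: an H-space structure only up to homotopy, or a strict group model that is merely homotopy equivalent to $\RP^\infty$, does not give the exact equalities $\kappa_0=\kappa_1=\kappa$ that the spreading argument requires; to keep them, use a multiplication with strict unit on $\RP^\infty$ itself (e.g. polynomial multiplication on the projectivization of $\R[X]\cong\R^\infty$), or simply fall back on the paper's compression into an odd-dimensional skeleton. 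For the $\RP^n$-controlled case with $n$ odd your isotopy stays inside $\RP^n$, exactly as in the paper, so the resulting cobordism is indeed controlled.
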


For the proof we need some preliminary remarks.

Projective space of odd dimension $\RP^{2d+1}$
consists of elements of the form
$ [x ] = \{  x , - x \}$,
where $ x \in S^{2d+1}$.
Since $S^{2d+1} \subset \R^{2d+2} \cong \C^{d+1}$,
we can write
$x $ as an array of complex numbers:
$ (z_1, \ldots , z_{d+1})$.

Define an isotopy
$h_t : \RP^{2d+1} \to \RP^{2d+1}$, $t\in I$ by
the formula
$$
h_t ( [ (z_1, \ldots , z_{d+1}) ] ) 
=  [( e^{\pi i t} \cdot z_1 , \ldots , e^{\pi i t} \cdot z_{d+1} )] .
$$
It is clear that $h_0=h_1=\id $.

Points of the total space
 $E(\gamma )$ 
 are written as arrays of the form
$[z_1 , \ldots , z_{d+1} ; \lambda ]$, see
(\ref{points-of-total-space}).

For $t\in I$
define a map
$H_t : E(\gamma ) \to E(\gamma )$ 
by the formula
\begin{equation}\label{isotopy}
H_t ( [z_1 , \ldots , z_{d+1} ; \lambda ] ) =
[  e^{\pi i t} \cdot z_1 , \ldots , e^{\pi i t} \cdot z_{d+1}  ; \lambda  ] .
\end{equation}

Next statement has straightforward proof:

\begin{lemma}
1. The map $H_t$ is well-defined.
\\
2. The diagram
$$
\begin{CD}
E(\gamma )  @> H_t >> E(\gamma )  \\
@VVV @VVV \\
\RP^{2d+1} @> h_t >> \RP^{2d+1}
\end{CD}
$$
commutes for $t\in I$.
\\
3. 
For induced commutative diagram
$$
\xymatrix{
E(\gamma ) \ar[rr]^{L_t} \ar[dr]^{} & &
E(h_t^{* } \gamma ) \ar[dl]_{}  \\
& \RP^{2d+1} & 
}
$$
we have $L_0 = \id $, $L_1 = {\Upsilon }_{\gamma } $.
\end{lemma}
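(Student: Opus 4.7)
The proof is a direct unwinding of the defining formula (\ref{isotopy}); there is no substantive obstacle, only a careful bookkeeping of signs, but the payoff in part 3 is the geometric content that makes the lemma useful for Statement \ref{reflection-all-sf}.

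For assertion 1, I will check that the formula respects the identification of rows described in (\ref{points-of-total-space}). The two representatives $(z_1,\ldots,z_{d+1};\lambda)$ and $(-z_1,\ldots,-z_{d+1};-\lambda)$ of a point of $E(\gamma)$ are sent to $(e^{\pi i t}z_1,\ldots,e^{\pi i t}z_{d+1};\lambda)$ and $(-e^{\pi i t}z_1,\ldots,-e^{\pi i t}z_{d+1};-\lambda)$ respectively. These two rows differ by simultaneous sign change in the $z$-coordinates and in $\lambda$, so they are identified in $E(\gamma)$; thus $H_t$ descends to a well-defined map.

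For assertion 2, I compute both paths around the square on the element $[z_1,\ldots,z_{d+1};\lambda]$: going right-then-down yields $[e^{\pi i t}z_1,\ldots,e^{\pi i t}z_{d+1}]$, and going down-then-right yields the same expression by the defining formula of $h_t$. Hence the square commutes.

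For assertion 3, I handle the two endpoints. At $t=0$, the formula gives $H_0=\id$ and $h_0=\id$, whence $L_0=\id$. At $t=1$, the factor $e^{\pi i}=-1$ acts on $S^{2d+1}$ by the antipodal map, which descends to the identity on $\RP^{2d+1}$; hence $h_1=\id$ and $h_1^{*}\gamma=\gamma$ canonically. The key calculation is
\[
H_1\bigl([z_1,\ldots,z_{d+1};\lambda]\bigr)=[-z_1,\ldots,-z_{d+1};\lambda]=[z_1,\ldots,z_{d+1};-\lambda],
\]
where the last equality uses the identification (\ref{points-of-total-space}). Thus $L_1$ acts on every fiber of $\gamma$ by multiplication by $-1$, which is exactly the fiberwise antipodal involution $\Upsilon_{\gamma}$. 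The geometric point, which will be exploited in Proposition \ref{reflection-all-sf}, is that although $h_t$ is a loop in $\operatorname{Diff}(\RP^{2d+1})$ based at the identity, its canonical lift to $\gamma$ is \emph{not} a loop but a path from $\id$ to $\Upsilon_{\gamma}$.
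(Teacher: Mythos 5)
Your verification is correct and is exactly the direct computation the paper has in mind (it omits the proof as ``straightforward''): well-definedness from the identification $(x;\lambda)\sim(-x;-\lambda)$, commutativity by inspection, and $H_1([z;\lambda])=[-z;\lambda]=[z;-\lambda]$ giving $L_1=\Upsilon_\gamma$. Nothing is missing, and your closing remark correctly identifies the geometric point used later in Statement \ref{reflection-all-sf}.
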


Now let us prove Statement \ref{reflection-all-sf}.

Let us construct a cobordism 
$(\hat F , \hat \kappa , \hat \Xi )$ 
of the triples under consideration.
Take a manifold $\hat M = M\times I$
and 
the product immersion $\hat F = F\times \id _I:M\times I \looparrowright \R^{n+k} \times I$.

Let
$$
N = 
\begin{cases}
n,&\text{for odd $n$;}\\
n+1,&\text{for even $n$.}
\end{cases}
$$

By the cell approximation theorem 
we can assume that
$\kappa (M) \subset \RP^n$.
(In the case 
of the 
$\RP^n$-controlled
group, 
this
inclusion holds automatically.)

Define the map
$\hat \kappa : M\times I \to \RP^N  $ 
by the formula
$$
\hat \kappa |_{M\times \{ t \} } = 
h_t \circ \kappa \circ p ,
$$
where 
$p:M\times I \to M$ is the projection, and 
$h_t : \RP^N\to \RP^N $ is defined by the formula (\ref{isotopy}).

Note that $\hat \kappa |_{M\times \{ 0 \} } = \kappa $,
$\hat \kappa |_{M\times \{ 1 \} } = \kappa $, 
as required.

Define the isomorphism
$\hat \Xi : \nu (\hat F ) \cong \hat \kappa ^{*} (k\gamma ) $.

To do this, for each $t\in I$ define
an isomorphism
$$
\hat \Xi _t : \nu (\hat F|_{M\times \{t \} }) 
\cong (\hat \kappa |_{M\times \{t \} }) ^{*} (k\gamma ) .
$$

Let
$\hat \Xi _t $ be the composition
\begin{equation}
\begin{split}
\nu (\hat F|_{M\times \{t \} })  & \cong 
\nu (\hat F|_{M\times \{0 \} }) \stackrel{\Xi }{\cong }
\kappa ^{*}(k\gamma )
\stackrel{\kappa ^{*} (L_t ) }{\cong }
\\
&
\cong
\kappa ^{*} (h_t^{*} (k\gamma ))
= (h_t \circ \kappa )^{*} (k\gamma )
= 
(\hat \kappa |_{M\times \{t \} }) ^{*} (k\gamma ) .
\end{split}
\end{equation}

Note that
$\hat \Xi |_{M\times \{ 0 \} } = \Xi $
and
$\hat \Xi |_{M\times \{ 1 \} } = \Theta $, as required.
\hfill $\qed $

Statements \ref{reflection-all-fr},
\ref{reflection-all-sf}
imply

\begin{corollary}\label{order-2}
Let $k$ be an odd integer.
Then the homomorphism
(\ref{t-skashivanie})
$$
\phi : Imm^{fr(k) } (n,k) \to Imm^{sf(k) } (n,k)
$$
satisfies
$2\Img \phi = \{ 0 \}$.
\end{corollary}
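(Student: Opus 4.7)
The plan is to combine Statements \ref{reflection-all-fr} and \ref{reflection-all-sf} applied to the same triple, once before and once after applying $\phi$, and observe that they give contradictory-seeming equalities whose consistency forces $2\phi = 0$ on representatives.

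First I would unwind the definition of $\phi$. An element $[(F,\Xi)] \in Imm^{fr(k)}(n,k)$ is represented by a pair where $\Xi : \nu(F) \cong k\varepsilon$. The homomorphism $\phi$ induced by $\{0\} \subset \Z/2$ sends this to the triple $[(F, c, \Xi')]$, where $c : M \to \RP^\infty$ is a constant map and $\Xi'$ is the composition $\nu(F) \stackrel{\Xi}{\cong} k\varepsilon \cong c^*(k\gamma)$ (using the standard trivialization of the pullback of $k\gamma$ by a constant map).

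Now let $\Theta : \nu(F) \stackrel{\Xi}{\cong} k\varepsilon \stackrel{\Upsilon}{\cong} k\varepsilon$ be the framing obtained by applying the fiberwise antipodal involution. Since $k$ is odd, Statement \ref{reflection-all-fr} gives
\[
[(F,\Theta)] = -[(F,\Xi)] \quad \text{in } Imm^{fr(k)}(n,k),
\]
and applying $\phi$ yields $\phi([(F,\Theta)]) = -\phi([(F,\Xi)])$ in $Imm^{sf(k)}(n,k)$. On the other hand, the image $\phi([(F,\Theta)])$ is represented by the triple $(F, c, \Theta')$ where $\Theta' : \nu(F) \stackrel{\Xi'}{\cong} c^*(k\gamma) \stackrel{\Upsilon}{\cong} c^*(k\gamma)$ (naturality of $\Upsilon$ with respect to pullback makes the two descriptions coincide). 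By Statement \ref{reflection-all-sf}, applied with $\kappa = c$, the triples $(F, c, \Xi')$ and $(F, c, \Theta')$ represent equal elements of $Imm^{sf(k)}(n,k)$, so
\[
\phi([(F,\Theta)]) = \phi([(F,\Xi)]).
\]

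Combining the two equalities gives $\phi([(F,\Xi)]) = -\phi([(F,\Xi)])$, i.e.\ $2\phi([(F,\Xi)]) = 0$ for every class in $Imm^{fr(k)}(n,k)$, which is exactly $2\,\mathrm{Im}\,\phi = \{0\}$. The only mildly subtle point—the one I would write out most carefully—is the compatibility between the antipodal involution on $k\varepsilon$ (used in Statement \ref{reflection-all-fr}) and the antipodal involution on $c^*(k\gamma)$ (used in Statement \ref{reflection-all-sf}) under the fixed isomorphism $c^*(k\gamma) \cong k\varepsilon$, but this is immediate from the identity $\kappa^* \circ \Upsilon_\xi = \Upsilon_{\kappa^*\xi}$ recorded just before Statement \ref{reflection-all-fr}.
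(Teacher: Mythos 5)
Your proof is correct and is exactly the argument the paper intends: the corollary is stated as an immediate consequence of Statements \ref{reflection-all-fr} and \ref{reflection-all-sf}, combined precisely as you do (antipodal framing gives $-\phi(a)$ on the framed side for odd $k$, but the same class on the skew-framed side). Your explicit check of the compatibility of the antipodal involutions under the pullback by the constant map is a detail the paper leaves implicit, and it is handled correctly via $\kappa^{*}\circ\Upsilon_{\xi}=\Upsilon_{\kappa^{*}(\xi)}$.
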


\subsection{Transfer homomorphism}

Recall the definition of the
transfer homomorphism \cite{A-E}
\begin{equation}\label{transfer1}
Imm^{sf(k), \RP^N } (n,k)  \to Imm^{fr(k)} (n,k)  .
\end{equation}
For the bundle $\gamma $ over $\RP^N$ 
let $S(\gamma )$ be the corresponding
spherical bundle over $\RP^N$,
and $\pi : S(\gamma ) \to \RP^N$ the projection map.
The pullback $\pi ^{*}(\gamma )$ is
the trivial line bundle over $S(\gamma )$.
Fix an isomorphism
$\tau : \pi ^{*}(\gamma ) \cong \varepsilon $.

Suppose we are given a triple
$(f:M\looparrowright \R^{n+k} ,\kappa : M\to \RP^N, 
\Xi : \nu (f) \cong \kappa ^{*} (k\gamma ) )$.

Take $\tilde M = S( \kappa ^{*} (\gamma )) $. 
It is an $n$-manifold,
and $p=\kappa ^{*}(\pi ) : \tilde M \to M$ 
is its 2-fold covering:
$$
\begin{CD}
\tilde M = S(\kappa ^{*}(\gamma )) @> \tilde \kappa >> S(\gamma )  @<<< \pi ^{*}(\gamma ) \stackrel{\tau }{\cong } \varepsilon \\
@V p = \kappa ^{*}(\pi )VV @V \pi VV @VVV \\
M @> \kappa >> \RP^N @<<< \gamma 
\end{CD}
$$
There is a sequence of isomorphisms
\begin{equation}\label{transfer-3}
p^{*} \kappa ^{*} (\gamma ) \cong
( \kappa \circ p) ^{*} (\gamma ) \cong
(\pi \circ \tilde \kappa )^{*}(\gamma )
= \tilde \kappa ^{*} (\pi ^{*}(\gamma ))
\stackrel{\kappa ^{*}(\tau )}{\cong }
\tilde \kappa ^{*}(\varepsilon )
\cong \varepsilon .
\end{equation}

In Remark
\ref{diff-and-imm},
for a given isomorphism
$\Xi : \nu (f) \cong \kappa ^{*} (k\gamma )$,
we give the method 
to construct the isomorphism
$p^{*}(\Xi ) \circ p' :
\nu (f\circ p) \cong p^{*} (\kappa ^{*} (k\gamma ))$.
Taking its composition with
$k$-th iterate of the isomorphism
(\ref{transfer-3}),
we obtain the isomorphism
$\tilde \Xi : \nu (\tilde f) \cong k\varepsilon  $.

Now, to a given triple
$(f:M\looparrowright \R^{n+k} ,\kappa : M\to \RP^N, 
\Xi : \nu (f) \cong \kappa ^{*} (k\gamma ) )$,
assign
the pair
$(\tilde f = f\circ p:\tilde M\looparrowright \R^{n+k} , 
\tilde \Xi : \nu (\tilde f) \cong k\varepsilon  )$.

This formula defines the homomorphism (\ref{transfer1}).

\begin{example}
The transfer homomorphism $\Z /2\cong Imm^{sf(1)} (0,1)  \to Imm^{fr(1)} (0,1)\cong \Z $ is zero.
\end{example}

\begin{example}
Composition of the homomorphism (\ref{t-skashivanie}) and tranfser
$$
Imm^{fr(k)} (n,k)  \to Imm^{sf(k)} (n,k)  \to Imm^{fr(k)} (n,k)  
$$
is for even $k$ a multiplication by $2$;
\\
and for odd $k$, a zero map.
\end{example}

\begin{example}
The transfer homomorphism
$$
Imm^{sf(5)} (3,5)  \to Imm^{fr(5)} (3,5)  
$$
is zero.

In fact, by the previous example
the composition
$$
Imm^{fr(5)} (3,5)  \to Imm^{sf(5)} (3,5)  \to Imm^{fr(5)} (3,5)  
$$
is zero. 
But the first homomorphism
$\Z/24 \cong Imm^{fr(5)} (3,5) \to Imm^{sf(5)} (3,5) \cong \Z/2$ is surjective
(see Theorem \ref{th1} and \cite[Theorem~1]{A-F}). Hence, the considered transfer is zero.
\end{example}

\section{Low-dimensional examples}

\subsection{Dimension $0$}

\begin{example}
\begin{equation}\label{fr-0-1}
Imm ^{fr(1)} (0,1) \cong \Pi _0 \cong \Z ,
\end{equation}
however
\begin{equation}\label{sf-0-1}
Imm ^{sf(1)} (0,1) \cong \Pi_1(\RP^{\infty} ) \cong \Z /2 ,
\end{equation}
since $\RP^{\infty}$ is the Thom space
of the universal line
$O(1)$-bundle.

Although
the elements of both group
are finite sets of points, 
and cobordisms are sets of segments 
(over which each line bundle is trivial),
in the case 
of the group of skew-framed immersions
we are to construct a characteristic map
$\kappa : M\to \RP^{\infty } $ on the cobordism.
Let us underline that
here additional possibilities arise,
in distiction from the case of framed immersions
(where $\kappa $ is a constant map).

Let us explain shortly the formulaes
(\ref{fr-0-1}) and
(\ref{sf-0-1}).

The generator $a_0$ of the first group
can be represented 
by an immersion of the point,
i.e. by the pair
$(f_0: p_0 \mapsto 0\in \R^1 ,
\Xi _0 : \nu (f_0) \cong \varepsilon )$, where $\Xi _0$ is an isomorphism (arbitrary, but fixed).

The homomorphism defined in
(\ref{t-skashivanie}) 
$$
\phi : Imm ^{fr(1)} (0,1) \to Imm ^{sf(1)} (0,1) 
$$
is surjective, and $\phi (a_0 ) \neq 0$.
Statement \ref{reflection-all-sf}
implies that $\phi (a) = \phi (-a) $; hence $Imm ^{sf(1)} (0,1) \cong \Z / 2$.
\end{example}

\subsection{Dimension $1$}

\begin{example}
The homomorphism (\ref{t-skashivanie})
$$
\Z/2\cong \Pi _1 \cong Imm^{fr(1)} (1,1) \stackrel{\phi }{\to } Imm^{sf(1)} (1,1) \cong \Pi _{2} (\RP^\infty ) 
\cong \Z /2
$$
is an isomorphism.

Argument 1: Each compact 1-manifold
is a union of circles.
Let an element $a_1\in Imm^{sf(1)} (1,1)$ be represented by the triple
$(F:S^1 \looparrowright \R^2, \kappa : S^1 \to \RP^\infty ,
\Xi : \nu (F) \cong \kappa ^{*}(\gamma ))$.
Since the bundle $\nu (F)$ is trivial, 
the map $\kappa $ is homotopic to a constant map.
Thus $a_1\in \Img \phi $, hence $\phi $ is an isomorphism.

Argument 2:
Consider the commutative diagram
$$
\xymatrix{
\Z/2 = Imm^{fr(1)} (1,1) \ar[rr]^{\phi } \ar[dr]^{q^{fr}} & &
 Imm^{sf(1)} (1,1) = \Z / 2 \ar[dl]_{q^{sf}}  \\
& \Z /2 & 
}
$$
The homomorphisms 
$q^{fr}$, $q^{sf}$ 
assign to 
a given general position immersion
a number of its double points mod 2 \cite[Proposition 2.2]{Eccles}.
Investigation of the ``figure eight'' immersion
shows that both $q^{fr}$ and $q^{sf}$ are epimorphisms, hence, are isomorphims.
Hence $\phi $ is also an isomorphism.
\end{example}

Let us introduce a notation
for the two next examples.
For a line bundle $\zeta $ over a 
closed 1-dimensional manifold $M = S^1 _1 \sqcup \ldots
\sqcup S^1_k$
we define a modulo 2 residue
$$
s (\zeta ) =
w_1 (\zeta |_{S^1_1}) + \ldots + w_1 (\zeta |_{S^1_k})
\in \Z /2.
$$
Recall that elements of the group
$
Imm^{fr(1)\times sf(0)} (1,1)
$
are represented by triples of the form
$(f,\kappa , \Xi )$, where
$f:M \looparrowright \R^2$ 
is an immersion of a
closed 1-manifold,
$\kappa  : M \to \RP^\infty $ is a continuous map,
and
$\Xi : \nu (f) \cong \kappa ^{*} (1\varepsilon \oplus 0\gamma )$ is an isomorphism.

\begin{example}
There is an isomorphism
$$
Imm^{fr(1)\times sf(0)} (1,1) \to Imm^{fr(1)} (1,1) \oplus \Z /2 ,
$$
it can be defined by the formula
$$
(f,\kappa , \Xi : \nu (f) \cong \kappa ^{*} (1\varepsilon \oplus 0\gamma ) ) 
\mapsto ((f,\Psi : \nu (f) \stackrel{\Xi }{\cong }\kappa ^{*} (1\varepsilon ) \cong \varepsilon ) ,
s (\kappa ^{*} (\gamma ))
)
$$
(Compare with Example \ref{sf-zero}.)
\end{example}

\begin{example}\label{1-1-transfer}
The transfer homomorphism
$$
Imm^{fr(1)\times sf(0)} (1,1) \stackrel{(\ref{transfer1})}{\to } Imm^{fr(1)\times fr(0)} (1,1) 
\cong Imm^{fr(1)} (1,1) \cong \Z /2
$$
is computed
by the map
$$
(f,\kappa , \Xi ) \mapsto s (\kappa ^{*}(\gamma ));
$$
in particular, it is surjective.
\end{example}

\begin{example}
The diagram
$$
\begin{CD}
Imm^{fr(1)\times sf(0)} (1,1) @>>> Imm^{fr(1)} (1,1)  \\
@V \cong VV @V\cong VV \\
Imm^{fr(1)\times sf(4k)} (1,1+4k) @>>> Imm^{fr(1+4k)} (1,1+4k)  \\
@V \cong VV @V\cong VV \\
Imm^{sf(4k)} (1,4k) @>>> Imm^{fr(4k)} (1,4k)  
\end{CD}
$$
commutes
(here all horizontal arrows
are transfer homomorphisms (\ref{transfer1}),
right vertical arrows are suspension isomorphisms (\ref{suspension}),
left vertical arrows are isomorphisms from Corollary \ref{periodicity}.)

Thus Example
\ref{1-1-transfer}
implies surjectivity of the transfer homomorphism
$
Imm^{sf(4k)} (1,4k) \to Imm^{fr(4k)} (1,4k) 
$. 
\end{example}

\subsection{Dimension $2$}

Investigation of both immersion cobordism group
of $n$-dimensional manifolds
in $\R^{n+k}$ (without additional structures) 
and its oriented variant
comes back to papers
\cite{Rokhlin}, \cite{Wells}, \cite{Uchida}.
Here we denote these groups by 
$Imm^{O}(n,k)$ and $Imm^{SO}(n,k)$ correspondingly.
The diagram
$$
\begin{CD}
\Pi _n \cong Imm^{fr(k)} (n,k) @>>> Imm^{sf(k)} (n,k) \cong \Pi _{n+k} (\RP_k ^\infty  ) \\
@VVV @VVV \\
Imm^{SO} (n,k) @>>> Imm^{O} (n,k)
\end{CD}
$$
commutes.

For $k=1$
vertical arrows are isomorphisms
(see Statement \ref{reflection-2}).

In particular case, for
$k=1$ and $n=2$, 
results of \cite{Pinkall}, \cite{Hass-Hughes}
and Statement~\ref{reflection-all-sf}
imply the following.
Here $\text{Arf}$ 
denotes the isomorphism
given by virtue of the Arf-invariant \cite{Pinkall};
and $\text{Brown}$ stands for its generalization \cite{Brown1972}
(see also \cite{GM}, \cite{Ma}).

\begin{proposition}\label{surfaces}
1) The homomorphism
$$
\Z/2\stackrel{\text{Arf}}{\cong  } Imm^{fr(1)} (2,1) \stackrel{(\ref{t-skashivanie})}{\to }
Imm^{sf(1)} (2,1)\stackrel{\text{Brown}}{\cong } \Z /8
$$
has zero kernel.
\\
2)
The Boy surface $B:\RP^2 \looparrowright \R^3$
(with arbitrary skew-framing)
serves as a generator of
the group $Imm^{sf(1)} (2,1)\cong \Z /8 $.
The same holds true for the mirror image  $\bar B$ of
the Boy surface.
\\
3) Each immersion $\RP^2 \looparrowright \R^3$ 
is regularly homotopic either to
the Boy surface or to
its mirror image.
\end{proposition}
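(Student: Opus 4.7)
The plan is to deduce all three statements from the isomorphisms $\text{Arf}: Imm^{fr(1)}(2,1) \cong \Z/2$ and $\text{Brown}: Imm^{sf(1)}(2,1) \cong \Z/8$, together with the Pinkall--Hass--Hughes classification of immersed surfaces in $\R^3$ and Statement \ref{reflection-all-sf}. For part 1), since the source is cyclic of order $2$, injectivity of $\phi$ reduces to showing that $\phi$ is nonzero. I would take a representative of the Arf generator by a framed immersed surface with Arf invariant $1$ (for example a torus with a suitable framing), trace its image through $\phi$ as the same immersion now skew-framed via the constant map $\kappa \equiv \const$, and invoke the classical relation between Brown and Arf invariants for orientable framed surfaces: the $\Z/4$-valued Brown quadratic refinement factors through $2\Z/4$, so the Brown invariant equals $4 \cdot \text{Arf} \in \Z/8$. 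The Arf generator thus maps to $4 \neq 0$, giving injectivity.

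For part 2), I would first check that the class $[B, \kappa_B, \Xi] \in Imm^{sf(1)}(2,1)$ does not depend on the choice of skew-framing $\Xi$: since $\nu(B) \cong \gamma|_{\RP^2}$ and the classifying map $\kappa_B : \RP^2 \hookrightarrow \RP^\infty$ is unique up to homotopy (to which by Statement \ref{kappa-homotopy} the cobordism class is insensitive), any two skew-framings differ by the fiberwise antipode $\Upsilon$, and Statement \ref{reflection-all-sf} tells us this does not change the class. Next I would compute the Brown invariant: any $\Z/4$-valued quadratic refinement $q$ of the mod-$2$ intersection form on $H_1(\RP^2;\Z/2) \cong \Z/2$ must lift the self-intersection $1$, so $q$ of the generator lies in $\{1,3\}$; the Gauss sum $\beta = \frac{1}{\sqrt{2}}(1 + i^{q(1)})$ then evaluates to $\pm 1 \in \Z/8$. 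Since $\pm 1$ generates $\Z/8$, both $[B]$ and $[\bar B]$ are generators.

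Part 3) is then immediate from Pinkall's and Hass--Hughes's classification: the set of regular-homotopy classes of immersions $\RP^2 \looparrowright \R^3$ has exactly two elements, realized by $B$ and $\bar B$. Combined with part 2) this finishes the statement. The principal obstacle is the computation in part 2 --- specifically, matching the algebraic Brown form used in the isomorphism $Imm^{sf(1)}(2,1) \cong \Z/8$ with the geometric $\Z/4$-quadratic refinement produced from a skew-framing of $\RP^2 \looparrowright \R^3$, and verifying that the mirror image flips the sign of $\beta$ so that $B$ and $\bar B$ represent distinct generators rather than the same element.
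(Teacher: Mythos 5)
Your proposal is correct and follows essentially the same route as the paper, which itself gives no independent argument but derives the statement from exactly these ingredients: the Pinkall and Hass--Hughes classification for part 3, the Brown--Arf relation (the Arf generator landing on $4\in\Z/8$) for part 1, and Statement \ref{reflection-all-sf} (together with uniqueness of $\kappa$ up to homotopy) to see that the class of the Boy surface is independent of the chosen skew-framing, with the Gauss-sum value $\pm1$ showing it generates. Note that for part 2 you do not even need to decide whether $\bar B$ represents $-[B]$ or $[B]$, since both $\pm1$ generate $\Z/8$.
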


\subsection{Dimension $3$}

In dimension 3, we have already considered
Examples
\ref{simple-sphere}, 
\ref{replace-c-by-covering}.

\begin{example}\label{Freedman-sphere}
From \cite[Remark 1]{Freedman} it follows that
there exists a generic
immersion
${\mathbb F} :S^3 \looparrowright \R^4$ such that
the pair $(\mathbb F , \Xi _{\mathbb F})$,
where the isomorphism $\Xi _{\mathbb F} : \nu (\mathbb F) \cong \varepsilon $
is defined via external normal vector,
represents a generator
of the group $Imm^{fr(1)} (3,1) \cong \Pi _3 \cong \Z /24  $.
\end{example}

\begin{example}\label{gener-1}
The main theorem of the paper
\cite{Freedman}
implies that
the homomorphism
$$
Imm^{fr(1)} (3,1) 
\stackrel{(\ref{t-skashivanie}) }{\to } Imm^{sf(1)} (3,1) \cong \Pi _{4} (\RP^\infty _1)\cong \Z /2  
$$
is surjective.
Thus the triple
$({\mathbb F} , c=\const , \Xi _{\mathbb F}:
\nu (\mathbb F) \cong c^{*}(\gamma ) )$
represents a generator
of the group $Imm^{sf(1)} (3,1) $.
\end{example}

\begin{example}\label{Freedman-control}
The triple
$({\mathbb F} , c, \Xi _{\mathbb F} )$
considered as an element of the group
 $Imm^{sf(1),\RP^3} (3,1) $ differs from zero, because
 the natural homomorphism
$Imm^{sf(1),\RP^3} (3,1) 
\to Imm ^{sf(1)} (3,1)$\
(see formula (\ref{control}))
is surjective.

By Corollary \ref{order-2}
the triple
$({\mathbb F} , c, \Xi _{\mathbb F} )$
considered as an element of
the group $Imm^{sf(1),\RP^3} (3,1) $
has order $2$.
\end{example}

\section{Twisting and untwisting
of skew-framed immersions
with control in $\RP^3$}

Let
$
(f:M^n \looparrowright \R^{n+k} ,
\kappa :M^n \to \RP^3 , 
\Xi : \nu (f) \cong \kappa ^{*} (k\gamma ))$ be
a
$(k\gamma , \RP^3)$-immersion.

We will define two operations:
twisting,
that is,
transforming of a
$(k\gamma \oplus 4\varepsilon ,\RP^3)$-immersion
of an $n$-manifold
into a
$((k+4)\gamma ,\RP^3)$-immersion of the same manifold,
and untwisting,
that is, transforming
of a
$((k+4)\gamma ,\RP^3)$-immersion of an $n$-manifold
into a
$(k\gamma \oplus 4\varepsilon ,\RP^3)$-imemrsion
of the same manifold.

Points of the sphere
$S^3$ 
can be interpreted as quaternions of unit norm,
in particular, they can be multiplied.
According to notation in the introduction,
points of the total space
of $4\gamma $ over $\RP^3$
are written
as arrays of the form
$[x; \lambda _1,\lambda _2,\lambda _3,\lambda _4]$,
where $x\in S^3$, $\lambda _k\in \R$;
such array is a pair of identified arrays
$$
(x; \lambda_1,\lambda_2,\lambda_3,\lambda_4 ) \sim
(-x; -\lambda_1,-\lambda_2,-\lambda_3,-\lambda_4 ) .
$$
It will be convenient
to identify $\R^4$ with quaternions;
therefore to replace a 4-tuple of numbers
 $\lambda_1,\ldots , \lambda_4$
by a quaternion $w$, and to
write points of the total space of $4\gamma $ over $\RP^3$
as arrays
 $[x;w]$, where $x\in S^3$, $w\in \mathbb H$.

Using this notation, define an isomorphism
 $I_{\RP^3} : 4\varepsilon \cong 4\gamma $ over $\RP^3$ 
by the formula
\begin{equation}\label{I-RP3}
( [x] ; 
\lambda_1,\lambda_2,\lambda_3,\lambda_4) 
\mapsto
[x ; (\lambda_1 + \lambda_2 i + \lambda_3 j + \lambda_4 k ) \cdot x  ].
\end{equation}

\subsection{Twisting procedure $\mathbb T$}

Suppose we are given
a $(k\gamma \oplus 4\varepsilon ,\RP^3 )$-immersion
of an $n$-dimensional manifold:
$$(f: M ^n \looparrowright \R^{n+k+4} , \kappa : M\to \RP^3 , 
\Xi : \nu (f) \cong \kappa ^{*} (k\gamma \oplus 4\varepsilon ) ).$$
Denote  by
 $\Xi ^{\mathcal T}$ the composite isomorphism
$$ 
\nu (f) 
\stackrel{\Xi }{\cong }
\kappa ^{*}(k\gamma  \oplus 4\varepsilon )
\cong
\kappa ^{*}(k\gamma ) \oplus \kappa ^{*}( 4\varepsilon )
\stackrel{\id \oplus \kappa ^{*} (I_{\RP^3})}{\cong }
\kappa ^{*}(k\gamma ) \oplus \kappa ^{*}( 4\gamma )
= \kappa ^{*} ((k+4)\gamma ).
$$
[Symbol $\mathcal T$ is the first letter
of the word 
Twist.]

The triple
$$(f : M ^n \looparrowright \R^{n+k+4} , 
\kappa  :M\to \RP^3 , \Xi ^{\mathcal T} : 
\nu (f) \cong \kappa ^{*} ((k+4)\gamma ) )$$
is a $((k+4)\gamma  ,\RP^3)$-immersion.

Define the twisting
$\mathbb T$ by the formula
$$
\mathbb T (f,\kappa , \Xi ) = (f,\kappa , \Xi ^{\mathcal T}).
$$

\subsection{Untwisting procedure $\mathbb U$}

Now let a
$((k+4)\gamma ,\RP^3)$-immersion
of an $n$-manifold be given:
$$(f: M^n \looparrowright \R^{n+k+4} , \kappa :M\to \RP^3 , 
\Xi : \nu (f) \cong \kappa ^{*} ((k+4)\gamma ) ) .$$ 
Define an isomorphism $\Xi ^{\mathcal U}$ as the composition
$$ 
\nu (f) 
\stackrel{\Xi }{\cong }
 \kappa ^{*} ((k+4)\gamma )
 =\kappa ^{*} (k\gamma ) \oplus \kappa ^{*}(
 4\gamma )
\stackrel{\id \oplus \kappa ^{*} (I_{\RP^3 }^{-1}) }{\cong }
\kappa ^{*} (k\gamma ) \oplus \kappa ^{*}(
 4\varepsilon ) =
\kappa ^{*} (k\gamma \oplus 4\varepsilon ) .
$$
[Symbol $\mathcal U$ is the first letter of the word
Untwist.]

The triple
$$(f : M ^n\looparrowright \R^{n+k+4} , 
\kappa  : M \to \RP^3 , \mathcal U (\Xi ) : 
\nu (f) \cong \kappa ^{*} (k\gamma \oplus 4\varepsilon ) )
$$
is a $(k\gamma \oplus 4\varepsilon ,\RP^3 )$-immersion.

Define the
untwisting
$\mathbb U$ by the formula
$$
\mathbb U (f,\kappa , \Xi ) = (f,\kappa , \Xi ^{\mathcal U}).
$$

The following two statements are evident:

\begin{proposition}
For each $(k\gamma \oplus 4\varepsilon ,\RP^3 )$-immersion
$(f: M^n \looparrowright \R^{n+k+4}, \kappa : M\to \RP^3 ,  
\Xi  : \nu (f) \cong \kappa ^{*}(k\gamma \oplus 4\varepsilon ))$ 
we have
$$
\mathbb U \circ \mathbb T 
(f, \kappa ,  \Xi  )
=
(f, \kappa ,  \Xi  ) .
$$
For each $((k+4)\gamma  ,\RP^3 )$-immersion
$(f: M^n \looparrowright \R^{n+k+4}, \kappa : M\to \RP^3 ,  
\Xi  : \nu (f) \cong \kappa ^{*}((k+4)\gamma  ))$ 
we have
$$
\mathbb T \circ \mathbb U 
(f, \kappa ,  \Xi  )
=
(f, \kappa ,  \Xi  ) .
$$
\end{proposition}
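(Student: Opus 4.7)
The proof plan is essentially a direct unwinding of the definitions, using the fact that $I_{\RP^3}: 4\varepsilon \cong 4\gamma$ is a genuine isomorphism of bundles over $\RP^3$ and hence $I_{\RP^3}^{-1}$ is its two-sided inverse. Both operations $\mathbb{T}$ and $\mathbb{U}$ leave the immersion $f$ and the characteristic map $\kappa$ untouched and alter only the trivialization, so the entire verification reduces to chasing a single composition of bundle isomorphisms over $M$.

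For the first identity, I would start with a $(k\gamma \oplus 4\varepsilon, \RP^3)$-immersion $(f,\kappa,\Xi)$, form $\mathbb{T}(f,\kappa,\Xi) = (f,\kappa,\Xi^{\mathcal T})$, and then substitute $\Xi^{\mathcal T}$ into the definition of $\mathbb{U}$. Unwinding, $(\Xi^{\mathcal T})^{\mathcal U}$ is the composition
$$
\nu(f) \stackrel{\Xi}{\cong} \kappa^{*}(k\gamma) \oplus \kappa^{*}(4\varepsilon)
\stackrel{\id \oplus \kappa^{*}(I_{\RP^3})}{\cong}
\kappa^{*}(k\gamma) \oplus \kappa^{*}(4\gamma)
\stackrel{\id \oplus \kappa^{*}(I_{\RP^3}^{-1})}{\cong}
\kappa^{*}(k\gamma) \oplus \kappa^{*}(4\varepsilon).
$$
Functoriality of the pullback gives $\kappa^{*}(I_{\RP^3}^{-1}) \circ \kappa^{*}(I_{\RP^3}) = \kappa^{*}(I_{\RP^3}^{-1} \circ I_{\RP^3}) = \kappa^{*}(\id) = \id$, so the whole composition collapses to $\Xi$. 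The second identity is symmetric: starting with a $((k+4)\gamma, \RP^3)$-immersion, the middle factor becomes $\kappa^{*}(I_{\RP^3}) \circ \kappa^{*}(I_{\RP^3}^{-1}) = \id$, and again one is left with $\Xi$.

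There is essentially no obstacle: the only thing to verify is that $I_{\RP^3}$ defined by formula (\ref{I-RP3}) is well-defined (the sign ambiguity $x \sim -x$ produces matching sign changes in both factors of the product $(\lambda_1 + \lambda_2 i + \lambda_3 j + \lambda_4 k)\cdot x$, so the output is invariant) and that it is a fiberwise linear isomorphism (multiplication in $\mathbb H$ by a unit quaternion $x$ is a linear automorphism of $\mathbb H \cong \R^4$). Both facts are immediate from the quaternion structure on $\R^4$. The proof is thus a one-line observation that $\mathbb U$ and $\mathbb T$ are defined by mutually inverse bundle automorphisms of $\kappa^{*}(k\gamma) \oplus \kappa^{*}(4\gamma) \oplus 0$ and nothing else needs to be checked, which is exactly why the authors label the statement ``evident.''
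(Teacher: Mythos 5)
Your proof is correct and is exactly the definition-unwinding verification the paper has in mind when it labels the statement evident: $\mathbb T$ and $\mathbb U$ change only the structure isomorphism, by post-composing with $\id \oplus \kappa^{*}(I_{\RP^3})$ and $\id \oplus \kappa^{*}(I_{\RP^3}^{-1})$ respectively, and these cancel by functoriality of pullback. Your added check that $I_{\RP^3}$ from (\ref{I-RP3}) is well-defined and fiberwise linear is a sensible inclusion, matching the paper's intent.
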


\begin{proposition}
Procedures $\mathbb T$ and $\mathbb U$
induce mutually inverse isomorphisms
$$
Imm ^{sf(k)\times fr(4),\RP^3 } (n,k+4) 
\stackrel{\mathbb T, \mathbb U}{\longleftrightarrow }
Imm ^{sf(k+4),\RP^3 } (n,k+4) .
$$
\end{proposition}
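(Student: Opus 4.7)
The plan is to show three things in order: (i) the procedures $\mathbb T$ and $\mathbb U$ descend to well-defined set-theoretic maps between the two cobordism groups, (ii) they are group homomorphisms, and (iii) they are mutually inverse. The previous proposition already handles the inversion relation at the level of triples, so once (i) and (ii) are in place, (iii) follows automatically.

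For (i), the key observation is that the bundle isomorphism $I_{\RP^3}:4\varepsilon\cong 4\gamma$ defined in (\ref{I-RP3}) is a genuine isomorphism of vector bundles over the fixed base $\RP^3$, depending on no other data. Consequently, given a cobordism
$(\mathcal F:\mathcal M\looparrowright\R^{n+k+4}\times I,\kappa:\mathcal M\to\RP^3,\Xi:\nu(\mathcal F)\cong\kappa^{*}(k\gamma\oplus 4\varepsilon))$ between two $(k\gamma\oplus 4\varepsilon,\RP^3)$-immersions, one can define the twisted isomorphism
$$\Xi^{\mathcal T}:\nu(\mathcal F)\stackrel{\Xi}{\cong}\kappa^{*}(k\gamma)\oplus\kappa^{*}(4\varepsilon)
\stackrel{\id\oplus\kappa^{*}(I_{\RP^3})}{\cong}\kappa^{*}((k+4)\gamma)$$
exactly as over a closed manifold. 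The restriction of $\Xi^{\mathcal T}$ to either boundary component coincides with the twisting of the corresponding boundary $\xi$-structure, because pullback commutes with restriction. Hence $\mathbb T$ carries cobordisms to cobordisms, and symmetrically for $\mathbb U$. This argument is the content of step (i); I expect no obstacle here, as it is purely functorial.

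For (ii), the additivity in $Imm^{\cdot,\RP^3}(n,k+4)$ is implemented by disjoint union of representatives with a fixed common target $\RP^3$. Since $\mathbb T$ does not touch the underlying immersion $f$ or the map $\kappa$, and acts on the framing fiberwise via $I_{\RP^3}$, it commutes with disjoint union on the nose; likewise for $\mathbb U$. Therefore both are group homomorphisms. For (iii), the preceding proposition asserts $\mathbb U\circ\mathbb T=\id$ and $\mathbb T\circ\mathbb U=\id$ on triples; passing to cobordism classes gives the same equalities of homomorphisms between the groups, so $\mathbb T$ and $\mathbb U$ are mutually inverse isomorphisms.

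The only step that requires any care is (i), and the main obstacle there is purely notational: one must unwind the definitions of pullback and the natural isomorphisms $\kappa^{*}(\xi_1\oplus\xi_2)\cong\kappa^{*}(\xi_1)\oplus\kappa^{*}(\xi_2)$ to confirm that applying $\mathbb T$ commutes with restricting to a boundary component, using the tautological identifications built into Definition \ref{bordant-triples}. Once that verification is in hand, the rest of the argument is formal.
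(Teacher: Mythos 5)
Your proposal is correct and is exactly the routine verification the paper has in mind: the paper offers no proof at all, declaring this statement (together with $\mathbb U\circ\mathbb T=\id$, $\mathbb T\circ\mathbb U=\id$ on triples) evident. Your key observation — that because the control map of any cobordism in $Imm^{sf(k)\times fr(4),\RP^3}(n,k+4)$ lands in $\RP^3$, the fixed isomorphism $I_{\RP^3}$ can be applied over the entire cobordism, so $\mathbb T$ and $\mathbb U$ descend to mutually inverse homomorphisms — is precisely the point the paper relies on.
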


We now obtain
from
(\ref{suspension}), (\ref{control})
the following known interpretation of the
James isomorphism
 \cite{James}. 
 It was given 
 and used by Mahowald \cite{Mahowald}.
We used it repeatedly in \cite{A-F}:

\begin{corollary}\label{periodicity}
For $n=0,1,2$
there are isomorphisms
$$
Imm ^{sf(k)} (n,k) 
\cong
Imm ^{sf(k+4)} (n,k+4) .
$$
\end{corollary}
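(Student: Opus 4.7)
The plan is to assemble the already-proved pieces into a four-step chain of isomorphisms, all predicated on the fact that for $n\leqslant 2$ any map of $M^n$ into $\RP^{\infty}$ can be pushed into the $3$-skeleton $\RP^3$.

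First, I would invoke the control chain (\ref{control}). For $n=0,1,2$ we have $n+1\leqslant 3$, so every map in
$$
Imm^{sf(k),\RP^3}(n,k)\to Imm^{sf(k),\RP^4}(n,k)\to\cdots\to Imm^{sf(k)}(n,k)
$$
is an isomorphism by the cell-approximation argument cited there. Hence
$Imm^{sf(k),\RP^3}(n,k)\cong Imm^{sf(k)}(n,k)$, and analogously $Imm^{sf(k+4),\RP^3}(n,k+4)\cong Imm^{sf(k+4)}(n,k+4)$.

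Second, I would apply the suspension isomorphism (\ref{suspension}) four times (valid because $k+j\geqslant 1$ along the way, so the hypothesis $k_1+k_2\geqslant 2$ of Hirsch's theorem is satisfied):
$$
Imm^{sf(k),\RP^3}(n,k)\;\stackrel{E}{\cong}\;Imm^{sf(k)\times fr(1),\RP^3}(n,k+1)\;\stackrel{E}{\cong}\;\cdots\;\stackrel{E}{\cong}\;Imm^{sf(k)\times fr(4),\RP^3}(n,k+4).
$$

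Third, I would use the preceding Statement, which says that the twisting/untwisting procedures $\mathbb T,\mathbb U$ (based on the quaternionic trivialization $I_{\RP^3}:4\varepsilon\cong 4\gamma$ over $\RP^3$) induce mutually inverse isomorphisms
$$
Imm^{sf(k)\times fr(4),\RP^3}(n,k+4)\;\stackrel{\mathbb T,\mathbb U}{\cong}\;Imm^{sf(k+4),\RP^3}(n,k+4).
$$

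Composing these three bijections with the control isomorphism of Step 1 applied to both ends yields the desired identification $Imm^{sf(k)}(n,k)\cong Imm^{sf(k+4)}(n,k+4)$. There is no genuine obstacle: all ingredients are already established. The only thing to watch is the dimensional hypothesis $n\leqslant 2$, which is precisely what allows the control space $\RP^3$ to suffice in Step 1; without it, the untwisting procedure $\mathbb U$, which relies essentially on the quaternionic multiplication on $S^3$ and is therefore available only over $\RP^3$, could not be lifted from $Imm^{\,\cdot\,,\RP^3}$ to $Imm^{\,\cdot\,}$.
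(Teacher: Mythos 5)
Your proposal is correct and follows exactly the route the paper intends: use the control chain (\ref{control}) (valid since $n+1\leqslant 3$) to replace $\RP^\infty$ by $\RP^3$ at both ends, apply the suspension isomorphism (\ref{suspension}) four times over $\RP^3$, and then invoke the twisting/untwisting isomorphisms $\mathbb T,\mathbb U$ between $Imm^{sf(k)\times fr(4),\RP^3}(n,k+4)$ and $Imm^{sf(k+4),\RP^3}(n,k+4)$. Your closing remark on why $n\leqslant 2$ is essential also matches the paper's observation that no analogue holds in the three-dimensional case.
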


There is no analogue of Corollary
\ref{periodicity}
for three-dimensional case. 
Moreover, twisting of cobordant
elements can be non-cobordant.

\begin{example}\label{E-1-zero}
Take the 4-suspension of the triple
 $(\mathbb E , c, \Phi )$
described in Example \ref{simple-sphere}.

This triple $E^4(\mathbb E , c, \Phi )$
represents zero element
of the group
$Imm^{sf(1)\times fr(4)}(3,5)$.

Since $c$ is a constant map,
the twisting
does not change the triple
$E^4(\mathbb E , c, \Phi )$.
Thus
$$
\mathbb T (E^4(\mathbb E , c, \Phi )) = 0
\quad 
\text{in the group}
\quad
Imm^{sf(5)} (3,5).
$$
\end{example}

\begin{example}\label{F-5}
Take the 4-suspension of the triple
$(\mathbb F , c ,\Xi _{\mathbb F})$
described in Example \ref{gener-1}.
This triple 
$E^4(\mathbb F , c ,\Xi _{\mathbb F})$
generates the group
$Imm^{sf(1)\times fr(4)} (3,5)
\cong Imm^{sf(1)} (3,1) \cong \Z /2$.
Since $c$ is a constant map,
the twisting does not change this triple.

Further,
the result of twisting
$$
E^4(\mathbb F , c ,\Xi _{\mathbb F})
\neq 0
\quad
\text{ in the group }
\quad
Imm^{sf(5)} (3,5).
$$

In fact, the triple 
$E^4(\mathbb F , c ,\Xi _{\mathbb F})$
is the image of a generator
$E^4(\mathbb F , \Xi _{\mathbb F})$ of the group
$Imm^{fr(5)}(3,5)\cong \Z /24$
under the homomorphism
$Imm^{fr(5)}(3,5)\to Imm^{sf(5)}(3,5)$, see
(\ref{t-skashivanie}).
This homomorphism is non-zero 
\cite[Theorem 1]{A-F}.

[Below we prove that $Imm^{sf(5)} (3,5) \cong \Z /2$,
see Theorem \ref{th1}.]
\end{example}

\begin{example}\label{rho-Twist}
In Example \ref{replace-c-by-covering}
we constructed the triple $(\mathbb E , \rho , \Xi _{\mathbb E})$.
Treated as an element of the group
$Imm^{sf(1)} (3,1)$, it equals zero.
Consider the image of this triple
under the composition of homomorphisms
\begin{equation}
\begin{split}
Imm^{sf(1),\RP^3} (3,1)
&
\stackrel{E^4}{\cong }
Imm^{sf(1)\times fr(4),\RP^3} (3,5)
\stackrel{\mathbb T}{\cong }
\\
&
\cong
Imm^{sf(5),\RP^3} (3,5)
\twoheadrightarrow
Imm^{sf(5)} (3,5) .
\end{split}
\end{equation}
The embedding $\mathbb E$ remains
unchanged.
Only the structure of normal bundle
changes.
It is easy to verify that
we will obtain, up to a sign,
the element 
 $t_1 (\nu )$,
where
$\nu $ is a generator
of the group
$Imm^{fr(5)}(3,5)\cong \Z /24$
(see \cite[Statements 4,5]{A-F}),
and the homomorphism 
$t_1 : Imm^{fr(5)} (3,5)
\to Imm^{sf(5)}(3,5)$
is defined by the formula (\ref{t-skashivanie}).
This homomorphism is non-zero
\cite[Theorem 1]{A-F}.
\end{example}

\begin{remark}\label{E-5}
Arguments of Example \ref{rho-Twist} also
show that the triple
 $(\mathbb E , \rho , \Xi _{\mathbb E})$
represents a non-zero element
in the group $Imm^{sf(1),\RP^3} (3,1)$.
\end{remark}

\section{Computation of the group $Imm^{sf(5)}(3,5)$}

\begin{theorem}\label{th1}
There are isomorphisms 
$$ 
Imm^{sf(5), \RP^3 }(3,5) \cong 
\Pi _8 (\RP^8 _5) \cong \pi _8 (\RP^8 _5)
\cong \Z/2 \oplus \Z /2, 
$$
$$
Imm^{sf(5)}(3,5) \cong 
\Pi _8 (\RP^\infty _5) \cong 
\pi _8 (\RP^\infty _5)\cong \Z/2 .
$$
\end{theorem}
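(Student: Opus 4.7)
The plan is first to translate everything into stable homotopy via Statement \ref{Thom-space} applied with $\xi = 5\gamma$ over the bases $B=\RP^3$ and $B=\RP^\infty$, yielding the identifications $Imm^{sf(5),\RP^3}(3,5) \cong \Pi_8(\RP^8_5)$ and $Imm^{sf(5)}(3,5) \cong \Pi_8(\RP^\infty_5)$. Both Thom spaces are $4$-connected CW-complexes and $8 = 2\cdot 4$, so the Freudenthal suspension theorem sits right at the edge of the stable range and gives $\pi_8 \cong \Pi_8$ for each. This disposes of the first two isomorphisms in each chain.

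I would read off $\Pi_8(\RP^\infty_5) \cong \Z/2$ directly from the Liulevicius tables \cite{L}. A geometric generator is supplied by Example \ref{F-5}: the twisted $4$-suspension $\mathbb T \circ E^4 (\mathbb F, c, \Xi_\mathbb F)$ of the Freedman triple is non-trivial, is $2$-torsion by Corollary \ref{order-2}, and coincides with the image under $Imm^{fr(5)}(3,5) \to Imm^{sf(5)}(3,5)$ of the generator of $Imm^{fr(5)}(3,5) \cong \Z/24$, which by \cite[Theorem~1]{A-F} surjects onto $\Z/2$.

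For $\Pi_8(\RP^8_5)$, I would use the cofibration $\RP^8_5 \hookrightarrow \RP^\infty_5 \to \RP^\infty_9$ and the associated long exact sequence of stable homotopy
\begin{equation*}
\Pi_9(\RP^\infty_9) \to \Pi_8(\RP^8_5) \to \Pi_8(\RP^\infty_5) \to \Pi_8(\RP^\infty_9).
\end{equation*}
The right-hand group vanishes because $\RP^\infty_9$ is $8$-connected, while $\Pi_9(\RP^\infty_9) \cong \Z/2$ is again an entry of \cite{L}. A candidate for a second geometric generator is produced by Example \ref{rho-Twist}: the class of $\mathbb T \circ E^4 (\mathbb E, \rho, \Xi_\mathbb E)$ in $Imm^{sf(5), \RP^3}(3,5)$.

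The main obstacle will be proving that the connecting map $\Pi_9(\RP^\infty_9) \to \Pi_8(\RP^8_5)$ is injective, so that one obtains an extension $0 \to \Z/2 \to \Pi_8(\RP^8_5) \to \Z/2 \to 0$, and then that this extension is split. By Example \ref{rho-Twist} the $\rho$-twist class projects to the non-zero Freedman class in $Imm^{sf(5)}(3,5)$, so the difference of these two representatives automatically lies in the kernel of the projection $Imm^{sf(5),\RP^3}(3,5) \twoheadrightarrow Imm^{sf(5)}(3,5)$; I expect the non-triviality of this difference to be detected by a direct cell-level analysis of $\RP^8_5$ (or by a suitable Hopf-invariant computation detecting the top cells in dimensions $7,8$), and the splitting to be enforced by Corollary \ref{order-2} applied inside the $\RP^3$-controlled group, which already forces both summands to be $2$-torsion.
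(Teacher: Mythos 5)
Your skeleton (Thom-space translation via Statement \ref{Thom-space}, Freudenthal, the cofibration $\RP^8_5\hookrightarrow\RP^\infty_5\to\RP^\infty_9$, the bound $|\Pi_8(\RP^8_5)|\leqslant 4$, splitting via an order-two lift) is a genuinely different route from the paper's: the paper never writes this exact sequence, but instead proves by a geometric surgery on the cobordism (Lemma \ref{3-kappa}) that the controlled group is generated by the two explicit triples $(\mathbb F,c,\Xi_{\mathbb F})$ and $(\mathbb E,\rho,\Xi_{\mathbb E})$, and then distinguishes them. However, your proposal has a genuine gap exactly at the step you yourself flag: the injectivity of the connecting map $\Pi_9(\RP^\infty_9)\to\Pi_8(\RP^8_5)$, equivalently the nonvanishing of the difference class in the kernel of $Imm^{sf(5),\RP^3}(3,5)\to Imm^{sf(5)}(3,5)$, is only ``expected'' to follow from an unspecified cell-level or Hopf-invariant analysis; no argument is given, and this nonvanishing is the substantive content of the first row of Theorem \ref{th1} (if the connecting map were zero, the controlled group would be $\Z/2$). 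The ingredients to close the gap are in the paper and are what its proof actually uses: under the untwisting isomorphism $Imm^{sf(5),\RP^3}(3,5)\cong Imm^{sf(1),\RP^3}(3,1)$ and the projection to $Imm^{sf(1)}(3,1)\cong\Z/2$, the class of $(\mathbb E,\rho,\Xi_{\mathbb E})$ maps to zero (Example \ref{replace-c-by-covering}) while the Freedman class maps to the generator (Example \ref{gener-1}); since both map to $t_1(\nu)\neq 0$ in $Imm^{sf(5)}(3,5)$ (Examples \ref{F-5}, \ref{rho-Twist}, resting on \cite{A-F}), their difference is a nonzero element of your kernel. Without some such detection your extension is not pinned down.

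Two further points. First, you import $\Pi_8(\RP^\infty_5)\cong\Z/2$ from tables; the paper cites \cite{L} only for stable homotopy groups of $\RP^\infty$ (its external input is $\Pi_4(\RP^\infty)\cong\Z/2$ in Example \ref{gener-1}) and treats the computation of $\Pi_8(\RP^\infty_5)$ as part of the theorem, deriving it from the controlled computation together with $t_1(\nu)\neq 0$ from \cite{A-F}; if you quote a table here you are assuming half of the statement, so you must at least verify that the stunted-space entry is really available. Second, Corollary \ref{order-2} only bounds orders of elements in the image of the framed group, so it does not by itself make ``both summands'' $2$-torsion; what the splitting actually requires is that some lift of the quotient generator has order two, which is supplied by Example \ref{Freedman-control} (via Statement \ref{reflection-all-sf}, valid for the $\RP^3$-controlled group because $n=3$ is odd); the order-two property of the other generator likewise needs Statement \ref{reflection-all-sf}, not Corollary \ref{order-2}. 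With these repairs your route would be shorter than the paper's (it avoids Lemma \ref{3-kappa} entirely), at the cost of replacing a geometric computation by a table lookup.
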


\begin{proof}
First isomorphisms in each row
are well-known
\cite{Wells}.
Second arrows in each row
are isomorphisms by
dimensional reasons,
see Statement
\ref{emdedding-case}.

Let us prove that $Imm^{sf(1), \RP^3} (3,1) 
\cong \Z / 2 \oplus \Z /2 $.
Therefore the isomorphic group
$Imm^{sf(5), \RP^3 }(3,5)$ will also be
calculated.

Recall the skew-framed immersions
$ (\mathbb F , c , \Xi _{\mathbb F})$
and 
$
(\mathbb E , \rho , \Xi _{\mathbb E} )
$
introduced in Examples
\ref{gener-1} and \ref{simple-sphere}.

\begin{lemma}\label{3-kappa}
For arbitrary skew-framed immersion
of a 3-dimensional manifold
$(f_M:M^3 \looparrowright \R^4 , 
\kappa _M : M\to \RP^3 , 
\Xi _M: \nu (f_M) \cong \kappa _M^{*} (\gamma ))$
there exists a skew-framed cobordism
$(F_W:W\looparrowright \R^4\times I , 
\kappa  _W: W\to \RP^3 , 
\Xi _W: \nu (F_W) \cong \kappa _W^{*} (\gamma ))$
between
$(f_M,\kappa _M, \Xi _M)$
and a skew-framed immersion of the form
$$\delta\cdot (\mathbb F , c , \Xi _{\mathbb F})
\sqcup m\cdot 
(\mathbb E , \rho , \Xi _{\mathbb E} ),
$$
where $\delta \in \{ 0 ; 1\}$ and $m\geqslant 0$.
\end{lemma}

Let us underline that
here we strive for obtaining the inclusion
$
\kappa _W (W) \subset \RP^3 .
$

{\bf Proof of Lemma \ref{3-kappa}}

Since $Imm^{sf(1)} (3,1) \cong \Z /2  $,
there exists
a skew-famed cobordism
$(F_V:V\looparrowright \R^4\times I , 
\kappa  _V: V\to \RP^4 , 
\Xi _V: \nu (F_V) \cong \kappa _V^{*} (\gamma ))$
between 
$(f_M,\kappa _M, \Xi _M)$
and one of the two skew-framed immersions:
either
$(\mathbb F , c , \Xi _{\mathbb F})$
or
$(\mathbb E , c,\Phi )$.

Let us underline that
the image
$\kappa _V(V) \subset \RP^4$ 
in general
can not be moved by homotopy into
$\RP^3 \subset \RP^4$.

We change the cobordism
$V$; 
the new cobordism
 $W$ will satisfy the inclusion $\kappa _W(W)\subset \RP^3$;
 but doing this, we in general change the boundary:
 $\partial W \neq \partial V$.

We assume that the map $\kappa _V$ is smooth.
Let $pt \in \RP^4$ be a point choosed so that
it is a regular value for
$\kappa _V$ and $pt \notin \kappa _V (\partial V)$.
Then $\kappa _V ^{-1} (pt ) = \{ p_1 , \ldots , p_m\}$.
There exists an open disk neighborhood
$O(pt)$ 
of $pt\in \RP^4$ such that the
preimages
$O(p_k) = \kappa _V^{-1} (O(pt))$
are pairwise disjoint
disk neighborhoods
of the points $p_k$ for $k=1,\ldots , m$, 
and that $\kappa _V$ maps each of them diffeomorphically onto
$O(pt)$.

Replace the cobordism
$V$ by $W = V - \cup _{k=1}^m O(p_k)$.
Put
$f_W = {f_V }|_{W}$,
$\kappa _W = {\kappa _V} |_W$,
$\Xi _W = {\Xi _V} |_W$.
 
 Homotopying the map
 $\kappa _W : W \to \RP^4 - O(pt ) $,
 we can assume that
 the inclusion
$ \kappa _W ( W )\subset \RP^3 \subset  \RP^4 - O(pt ) $ holds.
(Moreover, the homotopy can be choosed
so that the map $\kappa _W$ 
is unchanged nearby $M\subset \partial W$.)
Reference to Example
\ref{replace-c-by-covering} and Remark \ref{perevorot-1}
finish the proof.
\qed

\smallskip

By Lemma \ref{3-kappa} the group
$Imm^{sf(1),\RP^3} (3,1)$
is generated by two elements:
$(\mathbb F , c , \Xi _{\mathbb F})$ and
$ (\mathbb E , \rho , \Xi _{\mathbb E} )$.

Each of them is non-zero
(see Example \ref{Freedman-control}, Remark
\ref{E-5}). 

The element
$(\mathbb F , c , \Xi _{\mathbb F})$
has order 2
(Corollary \ref{order-2}).
Using Statement \ref{reflection-all-sf},
one can show that
the triple
$(\mathbb E , \rho , \Xi _{\mathbb E})$
also has order 2 in the group
$Imm^{sf(1),\RP^3} (3,1)$.

Let us show that the triples
$(\mathbb E , \rho , \Xi _{\mathbb E})$ and
$(\mathbb F , c ,\Xi _{\mathbb F})$
represent different elements
in the group
$Imm^{sf(1),\RP^3} (3,1)$.
For this purpose, consider the epimorphism
$Imm^{sf(1),\RP^3} (3,1) 
\to Imm^{sf(1)} (3,1) \cong \Z /2$.
According to Example
\ref{replace-c-by-covering},
the first triple maps to zero.
By Example \ref{gener-1}, 
the second triple
maps to non-zero element.
Hence they are not cobordant in the group
$Imm^{sf(1),\RP^3} (3,1)$.

Finally, this group is abelian.
So, we have proved that 
$Imm^{sf(1), \RP^3} (3,1) \cong \Z / 2\oplus \Z /2$.
The first part of Theorem \ref{th1} is proved.

\medskip

Let us prove that $Imm^{sf(5)}(3,5)\cong \Z /2$.
By \cite[Theorem 1]{A-F},
the element
$t_1(\nu )$ 
of this group 
is non-zero;
here $\nu \in Imm^{fr(5)}(3,5) \cong \Pi _3 \cong \Z /24$ is a generator, and
$t_1 : Imm^{fr(5)}(3,5) \to Imm^{sf(5)}(3,5)$ is the homomorphism defined by the formula (\ref{t-skashivanie}).

Consider the epimorphism
$$
Imm^{sf(1),\RP^3} (3,1)
\cong 
Imm^{sf(5),\RP^3} (3,5)
\twoheadrightarrow
Imm^{sf(5)}(3,5).
$$
By Examples \ref{F-5}, \ref{rho-Twist},
this composition maps
the generators
$(\mathbb F , c ,\Xi _{\mathbb F})$
 and
$(\mathbb E , \rho , \Xi _{\mathbb E})$
of the group
$Imm^{sf(1),\RP^3} (3,1)$
into
$ t_1(\nu )$ up to a sign.
Hence the group $Imm^{sf(5)}(3,5)$ is cyclic,
and $t_1 (\nu )$ is its generator.
Since
$Imm^{sf(1),\RP^3} (3,1)\cong \Z /2 \oplus \Z /2$,
we obtain
$Imm^{sf(5)}(3,5)\cong \Z /2$.

The Theorem is proved.
\end{proof}

\section{Transformation Theorem}

Take a 5-skew-framed immersion of a 3-manifold
$(f: M^3 \looparrowright \R^8 ,\kappa : M\to \RP^3 , 
\Xi : \nu (f) \cong \kappa ^{*} (5\gamma ))$.

Let $\mathcal A : 5\gamma \to 5\gamma $ be an isomorphism over $\RP^3$.
Denote the isomorphism
$$
\nu (f) \stackrel{\Xi }{\cong } \kappa ^{*} (5\gamma )
\stackrel{\kappa ^{*}(\mathcal A ) }
{\cong } \kappa ^{*} (5\gamma ))
$$
by $\Xi ^{\mathcal A } $.

\begin{theorem}\label{zamena}
Suppose $\mathcal A : 5\gamma \to 5\gamma $ is
an isomorphism over $\RP^3$
that preserves orientation in each fiber.
Then 
for each 
5-skew-framed immersion of a 3-manifold
$(f: M^3 \looparrowright \R^8 ,\kappa : M\to \RP^3 , 
\Xi : \nu (f) \cong \kappa ^{*} (5\gamma ))$
the triples
$(f,\kappa , \Xi ^{\mathcal A } )$ 
and
$(f,\kappa , \Xi )$ are cobordant in the group
$Imm^{sf(5)} (3,5)$.
\end{theorem}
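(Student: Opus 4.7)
The plan is to leverage the small-group computation from Theorem \ref{th1} ($Imm^{sf(5)}(3,5)\cong \Z/2$ and $Imm^{sf(5),\RP^3}(3,5)\cong \Z/2\oplus \Z/2$) together with the naturality of the construction $(f,\kappa,\Xi)\mapsto (f,\kappa,\Xi^{\mathcal A})$ under cobordisms and disjoint unions. Consequently this construction descends to a group endomorphism $\mathcal A_*$ of $Imm^{sf(5),\RP^3}(3,5)$, and it suffices to prove that $\iota\circ \mathcal A_*=\iota$ on a set of generators, where $\iota:Imm^{sf(5),\RP^3}(3,5)\twoheadrightarrow Imm^{sf(5)}(3,5)$ is the natural surjection. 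The two generators isolated in the proof of Theorem \ref{th1} are $g_1=\mathbb T E^4(\mathbb F, c,\Xi_{\mathbb F})$ (with constant characteristic map $\kappa=c$) and $g_2=\mathbb T E^4(\mathbb E,\rho,\Xi_{\mathbb E})$ (with $\kappa=\rho:S^3\to \RP^3$).

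For $g_1$ the argument is immediate: since $\kappa=c$ is constant, the pullback $c^*(\mathcal A)$ is a single element of $GL^+(5)$ acting uniformly on the fibers of $c^*(5\gamma)=5\varepsilon$, and $GL^+(5)$ is path-connected. Statement \ref{simple-changes}(2) then gives $\mathcal A_*(g_1)=g_1$ already in the $\RP^3$-controlled group.

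The essential case is $g_2$. Since $\RP^\infty\simeq K(\Z/2,1)$ has trivial $\pi_3$, the map $\rho:S^3\to \RP^3\subset \RP^\infty$ is null-homotopic in $\RP^\infty$. Applying Statement \ref{kappa-homotopy} along such a null-homotopy spreads both $g_2$ and $\mathcal A_*(g_2)$ to cobordant triples in $Imm^{sf(5)}(3,5)$ of the form $(F,c,\Xi')$ and $(F,c,\Xi'')$ with constant characteristic map, i.e.\ to framed immersions of $S^3$ in $\R^8$. A direct inspection of the spreading formula (keeping the coefficient vector fixed while the basepoint $\kappa_t(\pi(X))$ moves) shows that $\Xi''$ differs from $\Xi'$ by fiberwise multiplication by $a\circ \rho:S^3\to GL^+(5)\simeq SO(5)$, where $a:\RP^3\to SO(5)$ is the map classifying the orientation-preserving automorphism $\mathcal A$. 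The framed cobordism classes in $Imm^{fr(5)}(3,5)\cong \Pi_3\cong \Z/24$ therefore differ by the $J$-homomorphism image $J([a\circ \rho])$.

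The crux is the parity observation that $[a\circ \rho]\in \pi_3(SO(5))=\Z$ is always even. Identifying $\pi_3(SO(5))$ with $\Z$ via evaluation against a generator $\omega\in H^3(SO(5);\Z)=\Z$ (valid by Hurewicz applied to the 2-connected cover $\mathrm{Spin}(5)$), one has $[a\circ \rho]$ detected by $(a\circ \rho)^*(\omega)=\rho^*(a^*(\omega))$. Since $a^*(\omega)\in H^3(\RP^3;\Z)=\Z$ and $\rho:S^3\to \RP^3$ is a degree-$2$ orientation-preserving cover of the orientable 3-manifold $\RP^3$, the map $\rho^*$ on top cohomology is multiplication by $2$. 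Hence $J([a\circ \rho])$ is an even multiple of $\nu$ in $\Z/24$, and the non-trivial homomorphism $t_1:\Pi_3\to Imm^{sf(5)}(3,5)=\Z/2$ from (\ref{t-skashivanie}) (which is reduction mod $2$) sends it to zero. This yields $\iota\mathcal A_*(g_2)=\iota(g_2)$ and completes the proof by the group-homomorphism argument. The main obstacle is the parity statement in $\pi_3(SO(5))$, combined with carefully matching the spread framing difference to the explicit map $a\circ \rho$; both rely on the specific double-cover structure of $\rho:S^3\to \RP^3$ rather than generic obstruction theory.
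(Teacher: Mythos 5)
Your overall scheme is the same as the paper's: reduce, via the two generators of $Imm^{sf(5),\RP^3}(3,5)$ and additivity of $(f,\kappa,\Xi)\mapsto(f,\kappa,\Xi^{\mathcal A})$ over controlled cobordisms, to the single essential case of the generator with $\kappa=\rho$; dispose of the constant-$\kappa$ generator by connectedness of $GL^+(5)$; then pass to framed cobordism and show the framing change is ``even''. Where you diverge is the crux: the paper reduces the structure group to $SO(4)$ (using that $\RP^3\to S^4$ is null-homotopic), desuspends to $Imm^{fr(4)}(3,4)$, and shows the difference of integer Hopf invariants is the degree of a map $S^3\to S^3$ that factors through $\rho$, hence even; you instead stay stable and try to show $[a\circ\rho]\in\pi_3(SO(5))\cong\Z$ is even by a cohomological evaluation. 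Two remarks before the main point: (a) there is no trivialization of $5\gamma$ over $\RP^3$, so ``the map $a:\RP^3\to SO(5)$ classifying $\mathcal A$'' needs justification; what is true (and is exactly the paper's symmetry property in Step 2.5) is that the matrix of $\rho^{*}(\mathcal A)$ in the canonical trivialization (\ref{P-pullback}) is antipodally invariant, hence factors through $\rho$; (b) the identification of the framing difference with $J([a\circ\rho])$, and its invariance under the change of trivialization coming from the null-homotopy of $\rho$, are standard but should be said (conjugation by a varying gauge does not change the class in $\pi_3$).

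The genuine gap is the pivotal normalization you assert in parentheses: that evaluation against a generator $\omega$ of $H^3(SO(5);\Z)\cong\Z$ identifies $\pi_3(SO(5))$ with $\Z$. Hurewicz applied to $\mathrm{Spin}(5)$ gives $\pi_3(\mathrm{Spin}(5))\cong H_3(\mathrm{Spin}(5);\Z)$, but to transport this to your evaluation you must also know that the covering map carries $\omega$ to a \emph{generator} of $H^3(\mathrm{Spin}(5);\Z)$, equivalently that the Hurewicz image of a generator of $\pi_3(SO(5))$ generates $H_3(SO(5);\Z)$ modulo torsion. This is not automatic and is exactly where your parity argument could die: for $SO(3)$ the analogous index is $2$ (the generator of $\pi_3(SO(3))$ is the double cover $S^3\to\RP^3$, of degree $2$), so evaluation against $H^3$ is \emph{always} even and the factor $2$ you extract from $\rho$ proves nothing. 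For $SO(5)$ the needed index-one statement happens to be true --- one can check it, e.g., from the Wang sequence of $SO(4)\to SO(5)\to S^4$, using $H_3(SO(4))\cong\Z^2$, the fact that the kernel of $\pi_3(SO(4))\to\pi_3(SO(5))$ is generated by the clutching class of $TS^4$, and that left quaternion multiplication $S^3\to SO(4)$ has primitive Hurewicz image; this yields $H_3(SO(5);\Z)\cong\Z\oplus\Z/2$ with the generator of $\pi_3$ hitting the free generator --- but as written you assert the crux without proof, and the proof is not a formality. The paper's desuspension-plus-degree argument is precisely the device that avoids this delicate normalization; if you want to keep your stable route, you must supply the $H_3(SO(5))$ computation above (after which the rest of your argument, including $t_1\circ J$ killing even classes in $\Z/24$, goes through).
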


\begin{proof}
Lemma \ref{3-kappa} implies that
there exists a cobordism
$(F_W:W \looparrowright \R^8\times I,
\kappa _W : W\to \RP^3 , 
\Xi _W : \nu (F_W) \cong \kappa _W^{*} (5\gamma ))$
between the original triple
$(f,\kappa , \Xi )$ and a triple of the form
$$
\delta \cdot
\mathbb T (E^4(\mathbb F , c , \Xi _{\mathbb F }))
\sqcup
m\cdot \mathbb T (E^4(\mathbb E , \rho ,
\Xi _{\mathbb E })) ,
\quad
\text{ where }
\quad \delta \in \{ 0;1\},
\quad m\geqslant 0.
$$
We have
$
\mathbb T (E^4(\mathbb F , c , \Xi _{\mathbb F }))
=
E^4(\mathbb F , c , \Xi _{\mathbb F })$
(Example \ref{F-5}).
For convenience denote the triple
$\mathbb T (E^4(\mathbb E , \rho ,
\Xi _{\mathbb E }))$
by $(E^4 \mathbb E , \rho , \Psi _{\mathbb E})$.
Thus the triple
$(F_W,
\kappa _W , 
\Xi _W )$
gives a cobordism between
$(f,\kappa , \Xi )$ and a triple of the form
$$
\delta \cdot
E^4(\mathbb F , c , \Xi _{\mathbb F })
\sqcup
m\cdot (E^4(\mathbb E , \rho ,
\Psi _{\mathbb E }) ,
\quad
\text{ where }
\quad \delta \in \{ 0;1\},
\quad m\geqslant 0.
$$
Since $\kappa _W(W) \subset \RP^3$, 
the substitution
 $\mathcal A  $ can be done over the whole
 cobordism $W$.
 In other words, the triple
$(f_W,\kappa _W, \Xi _W^{\mathcal A  })$
is a cobordism between
$(f, \kappa , \Xi ^{\mathcal A  })$ 
and an immersion of the form
$$
\delta \cdot   
(E^4\mathbb F,c ,(E^4\Xi _{\mathbb F})^{\mathcal A  }) 
\sqcup
m\cdot (E^4 \mathbb E , \rho ,\Psi _{\mathbb E}^{\mathcal A  })
 ,
\quad
\text{where}
\quad
\delta \in \{ 0; 1 \},
\quad
m\geqslant 0.
$$

Idea of proof: show that the triples
$(E^4\mathbb F,c ,(E^4\Xi _{\mathbb F})^{\mathcal A  }) $
and
$(E^4\mathbb F,c ,E^4\Xi _{\mathbb F}) $
are cobordant,
and also the triples
$(E^4 \mathbb E , \rho ,\Psi _{\mathbb E}^{\mathcal A  })$
and 
$(E^4 \mathbb E , \rho ,\Psi _{\mathbb E})$
are cobordant.
Therefore, glueing
two cobordisms together,
we obtain
a cobordism between the triples
$(f,\kappa , \Xi ^{\mathcal A } )$ 
and
$(f,\kappa , \Xi )$.

The proof for the triple
$(E^4\mathbb F,c ,E^4\Xi _{\mathbb F}) $
is simple:
we can assume that the isomorphism
$\mathcal A$ is identical over
the point $c$ which is the image of the characteristic map, hence
$$
(E^4\mathbb F,c ,(E^4\Xi _{\mathbb F})^{\mathcal A }) 
=
(E^4\mathbb F,c ,E^4\Xi _{\mathbb F}) 
=
E^4(\mathbb F,c ,\Xi _{\mathbb F}) .
$$

The triple
$(E^4 \mathbb E , \rho ,\Psi _{\mathbb E}^{\mathcal A })$
needs a more detailed investigation.
This will be done in several steps.

{\bf Step 1.}
{\it 
Each automorphism
$\mathcal A: 5\gamma \to 5\gamma $ over $\RP^3$
that preserves orientation in fibers
is fiberwise homotopic to an automorphism of the form
$ {\mathcal B} \oplus \id _{\gamma }$,
where $\id : \gamma \to \gamma $ is the identity,
and
${\mathcal B} : 4\gamma \to 4\gamma $ preserves
orientation in fibers.}

In fact, it suffices to show that each map
 $\mu : \RP^3 \to SO(5)$ can be homotoped
 to a map
 whose image lies in $SO(4)$.
 For this purpose recall that
 there is a bundle $\pi : SO(5) \to S^4$ 
 with fiber $SO(4)$.
The composition $\pi \circ \mu : \RP^3 \to S^4$
is homotopic to a cell map, but the 
3-skeleton of $S^4$ is trivial,
so
 $\pi \circ \mu : \RP^3 \to S^4$
is homotopic to zero.
This homotopy
lifts to the total space $SO(5)$
and gives a homotopy
of the original map $\mu $ to a map
whose image is contained in a fiber, that is, in $SO(4)$.

So, we assume that 
$\mathcal A = 
{\mathcal B} \oplus \id _{\gamma }$,
where ${\mathcal B } : 4\gamma \to 4\gamma $ preserves orientation in fibers.

{\bf Step 2.}
Construct the isomorphism
$\Theta $ 
as a composition
$$
\nu (E^4\mathbb E) \stackrel{\Psi _{\mathbb E}}{\cong }
\rho ^{*}(5\gamma )
\stackrel{(\ref{P-pullback})}{\cong }
5\varepsilon ,
$$
and the isomorphism
$\Theta ^{\mathcal A}$ 
as a composition
$$
\nu (E^4\mathbb E) \stackrel{\Psi _{\mathbb E}^{\mathcal A}}{\cong }
\rho ^{*}(5\gamma )
\stackrel{(\ref{P-pullback})}{\cong }
5\varepsilon .
$$

{\bf 2.1.}
It follows from Statements \ref{kappa-homotopy}
\ref{reflection-all-sf}  that
the image of the pair
$(E^4\mathbb E , \Theta )$ 
under the homomorphism
$t_1 : Imm^{fr(5)} (3,5)\to Imm^{sf(5)}(3,5)$
(see formula (\ref{t-skashivanie}))
is cobordant to the triple
$(E^4\mathbb E , \rho , \Psi _{\mathbb E})$,
and the image of the pair
$(E^4\mathbb E , \Theta ^{\mathcal A})$ --- to the triple
$(E^4\mathbb E , \rho , \Psi _{\mathbb E}^{\mathcal A})$.

{\bf 2.2.}
In the framings $\Theta $, $\Theta ^{\mathcal A}$
the subbundle generated by vectors from second to fifth is transformed by twisting procedure.
It is convenient to assume
instead that
the subbundle generated by vectors from first to fourth is transformed.
More precisely,
define the isomorphism
$\Theta _1$
as the composition
$$
\nu (E^4\mathbb E) \cong \nu (\mathbb E) \oplus 4\varepsilon
=5\varepsilon = 4\varepsilon \oplus \varepsilon
\stackrel{I_{S^3}\oplus \id _{\varepsilon }}{\cong } 
4\varepsilon \oplus \varepsilon 
,
$$
and the isomorphism
$\Theta _1^{\tilde{\mathcal A}}$
as the composition
$$
\nu (E^4\mathbb E) \cong \nu (\mathbb E) \oplus 4\varepsilon
=5\varepsilon = 4\varepsilon \oplus \varepsilon
\stackrel{I_{S^3}\oplus \id _{\varepsilon }}{\cong } 
4\varepsilon \oplus \varepsilon 
\stackrel{\rho ^{*} (\mathcal B ) \oplus 
\id_{\varepsilon }}{\cong }
4\varepsilon \oplus \varepsilon .
$$
Here $I_{S^3} : 4\varepsilon \to 4\varepsilon $ 
is the automorphism
of the trivial bundle over $S^3$
defined by the formula
$$
(x; a_1,a_2,a_3,a_4)\mapsto
(x; (a_1+a_2i+a_3j+a_4k)\cdot x),
$$
compare with (\ref{I-RP3}).

Then 
the pairs
$(E^4\mathbb E , \Theta )$ � $(E^4\mathbb E , \Theta _1)$ are cobordant up to a sign,
and also are the pairs
$(E^4\mathbb E , \Theta ^{\tilde{\mathcal A}})$ and 
$(E^4\mathbb E , \Theta _1^{\tilde{\mathcal A}})$.

So, for the homomorphism
$t_1 : Imm^{fr(5)} (3,5)\to Imm^{sf(5)}(3,5)$
we have
$$
t_1 (E^4\mathbb E , \Theta _1)
\quad
\text{ is cobordant to }
\quad
(E^4\mathbb E , \rho , \Psi _{\mathbb E}),
$$
$$
t_1 (E^4\mathbb E , \Theta _1^{\tilde{\mathcal A}})
\quad
\text{ is cobordant to }
\quad
(E^4\mathbb E , \rho , 
\Psi _{\mathbb E}^{\tilde{\mathcal A}}).
$$

{\bf 2.3.}
$\Theta _1^{\tilde {\mathcal A} }
= 
\tilde {\mathcal A} \circ 
\Theta _1 $,
where
$\tilde {\mathcal A}$ is
the composition of the isomorphisms
$$
5\varepsilon 
\stackrel{(\ref{P-pullback})}{\cong }
\rho^{*}(5\gamma )
\stackrel{\rho ^{*} (\mathcal A )}{\cong }
\rho^{*}(5\gamma )
\stackrel{(\ref{P-pullback})}{\cong }
5\varepsilon 
$$
over $S^3$.

{\bf 2.4.}
$\tilde {\mathcal A}$
decomposes into a direct sum
$\tilde{\mathcal B} \oplus \id _{\varepsilon },$
where $\tilde {\mathcal B} :4\varepsilon \to 4\varepsilon $
is an isomorphism
which preserves orientation
in each fiber over
$S^3$.

{\bf 2.5.}
The isomorphism $\tilde {\mathcal B }:4\varepsilon \to 4\varepsilon $
possesses a symmetry property:
for the corresponding map
$\mu _{\tilde {\mathcal B}} : S^3 \to SO(4)$ we have
$$
\mu _{\tilde {\mathcal B}} (x) =
\mu _{\tilde {\mathcal B} }(-x ),
\quad x\in S^3.$$

{\bf Step 3.}
{\it Desuspension.}
Instead of the embedding
$E^4\mathbb E :S^3 \to \R^8$
consider the standard embedding $E^3\mathbb E :S^3 \to \R^7$.
We construct two framings of it.

Let us return to item {\bf 2.2} and denote
by $\Gamma $ the composite isomorphism
$$
\nu (E^3\mathbb E) \cong \nu (\mathbb E) \oplus 3\varepsilon
 = 4\varepsilon
\stackrel{I_{S^3}}{\cong } 
4\varepsilon  
,
$$
and by $\Gamma ^{\tilde {\mathcal B}}$ 
the composition 
$\tilde { \mathcal B} \circ \Gamma :
\nu (E^3\mathbb E) \cong 4\varepsilon $.

Then after suspension (\ref{suspension})
$$
E: Imm^{fr(4)} (3,4) \cong Imm^{fr(5)} (3,5)
$$
we obtain
$$
(E^3\mathbb E , \Gamma ) \mapsto (E^4\mathbb E , \Theta _1),
\quad
(E^3\mathbb E , \Gamma ^{\tilde{\mathcal B}}) 
\mapsto (E^4\mathbb E , \Theta _1^{\tilde {\mathcal A}}) .
$$

{\bf Step 4.}
Both stable Hopf invariant
$h^{st}: \Pi _3 \to \Z /2$
and homomorphism $t_1$ are surjective,
hence there exists
an isomorphism
$h^{sf}$
that makes the diagram
$$
\xymatrix{
 \Z /24  \cong \Pi _3 \cong Imm^{fr(5)} (3,5) \ar[rr]^{t _1} 
\ar[dr]^{h^{st}} & &
Imm^{sf(5)} (3,5)  \ar@{.>}[dl]_{h^{sf}}  \\
& \Z / 2 & 
}
$$
commutative.
So, for the elements
$a,b\in Imm^{fr(4)} (3,4)$
we have
$$t_1 (a) = t_1 (b)
\quad
\iff
\quad
h^{st}(a) = h^{st}(b).
$$

{\bf Step 5.}
The diagram
$$
\begin{CD}
Imm^{fr(4)} (3,4) @> E >> Imm^{fr(5)}(3,5) \\
@V h VV @V h ^{st} VV \\
\Z @> \mod 2 >> \Z /2
\end{CD}
$$
commutes.

Therefore it remains to show that
\begin{equation}\label{Hopf-diff}
h (E^3\mathbb E , \Gamma )
\equiv
h(E^3\mathbb E , \Gamma ^{\tilde{\mathcal B}})
\quad
\mod 2 .
\end{equation}

{\bf Step 6.}
It is known that
$(E^3\mathbb E , \Gamma )$ generates
the group
$Imm^{fr(4)} (3,4)$,
and its Hopf invariant equals 1.

Let us prove that
$h(E^3\mathbb E , \Gamma ^{\tilde{\mathcal B}})$
is odd.

The difference
$h(E^3\mathbb E , \Gamma )-
h(E^3\mathbb E , \Gamma ^{\tilde {\mathcal B}})$ 
coincides, up to a sign,
with degree of the map defined
by the first basis vector of the framing $\Gamma ^{\tilde {\mathcal B}}$, i.e. of the map
${\Delta }_{\tilde {\mathcal B}}:S^3 \to S^3$, 
where the image sphere is
the fiber
of spherization of the normal bundle
defined by $\Gamma $ \cite{pontr}.
From the symmetry property of the map
$\mu _{\tilde {\mathcal B}}$ 
(item {\bf 2.5})
follows the 
symmetry property of the map ${\Delta }_{\tilde {\mathcal B}}$, 
i.e.  ${\Delta }_{\tilde {\mathcal B}}(x) 
= {\Delta }_{\tilde {\mathcal B}}(-x)$ for all $x\in S^3$.
The symmetric map
${\Delta }_{\tilde {\mathcal B}}:S^3 \to S^3$
factorizes through the projection
map
$\rho :S^3\to \RP^3$.
Degree of this projection equals 2, hence
$\deg {\Delta }_{\tilde {\mathcal B}}$ is even.

Thus the  
congruence
(\ref{Hopf-diff}) is proved.

{\bf Step 7.}
The above arguments show that
the triples
$(E^4 \mathbb E , \rho , \Psi _{\mathbb E})$ and
$(E^4 \mathbb E , \rho , \Psi _{\mathbb E}^{\mathcal A })$
are cobordant in $Imm ^{sf(5)} (3,5)$.
As already said, this enables us
to glue together two cobordisms:
 $(F_W , \kappa _W , \Xi _W^{\mathcal A })$ 
and
$(F_W , \kappa _W , \Xi _W)$.
As a result, we obtain
a cobordism of the triples
$(f, \kappa , \Xi )$ and
$(f, \kappa , \Xi ^{\mathcal A })$.
Theorem is proved.
\end{proof}

\section{Product and Transfer Theorem}

The invariant
trivialization
(constructed with the help of Cayley
octonions)
$T\RP^7 \cong 7\varepsilon $
induces
by the Hirsch theorem
\cite[Theorem 6.3]{Hirsch-paper} 
an immersion
$f: \RP^7 \looparrowright \R^8$.
Fix an isomorphism
$\Xi : \nu (f) \cong \varepsilon $.
Also, take the identity map
$\kappa : \RP^7 \stackrel{\id }{\to } \RP^7 \subset \RP^\infty $.
The triple
$(f,\kappa , \Xi )$
represents an element of the group
$Imm^{sf(0)\times fr(1)}(7,1) $; denote it by~$\mathfrak p$.

Recall that
$$
Imm^{sf(5)\times fr(1)}(2,6) 
\cong
Imm^{sf(1)}(2,1)  \cong \Z /8
$$
and that the Boy surface 
$\RP^2 \looparrowright \R^3$
with arbitrary skew-framing can serve 
as a generator of the last group.
In \cite{A-F} we chose such a generator
and denoted it by $\upsilon \in Imm^{sf(1)} (2,1)$.
Its twisting
$\mathbb T(E^4(\upsilon ))$
is a generator of the group
$ Imm^{sf(5)} (2,5)$.

Define the homomorphism
\begin{equation}\label{skosn}
Imm^{sf(k)} (n,k) \to Imm^{sf(k+m)} (n-m,m+k) 
\end{equation}
(for $m=1$ see \cite{A-E}).
Suppose we are given 
 a representative of the first group, that is,
a triple
$
(f_M: M^n \looparrowright \R^{n+k},
\kappa  _M: M^n \to \RP^n ,
\Xi _M: \nu (f) \cong \kappa _M^{*}(k\gamma ))
$.
Assuming $\kappa _M $ to be transversal along $\RP^{n-m} \subset
\RP^n$,
put $N = \kappa _M^{-1} (\RP^{n-m})$.
It is a $(n-m)$-dimensional submanifold of $M$.
Besides, there is a natural isomorphism
$\nu (N,M) \cong \kappa _M^{*} (\nu (\RP^{n-m},\RP^n))
\cong \kappa _M^{*}(m\gamma ) $.
Let $f_N  :N\subset M \looparrowright \R^{n+k}$ be
the restriction of the immersion $f_M$;
$\kappa _N : N \subset M \to \RP^n $ the restriction of the map $\kappa _M$;
and let the isomorphism
$\Xi _N : \nu (f_N) \cong \kappa _N^{*}((k+m)\gamma )$
be obtained by the formula
$$
\nu (f_N) \cong \nu (N,M) \oplus \nu (f_M)
\cong \kappa _N  ^{*} (m\gamma ) \oplus 
\kappa _N^{*} (k\gamma ) =
\kappa _N^{*} ((m+k)\gamma ) .
$$
The correspondence
$$
(f_M,\kappa _M,\Xi _M)
\mapsto (f_N,\kappa _N, \Xi _N)$$
defines the homomorphism (\ref{skosn}).

\begin{proposition}
The image of the element $\mathfrak p $
under the homomorphism (\ref{skosn})
$$
Imm^{sf(0)\times fr(1)}(7,1) \to
Imm^{sf(5)\times fr(1)}(2,6) 
$$
generates the group $Imm^{sf(5)\times fr(1)}(2,6) 
\cong \Z /8 $. 
\end{proposition}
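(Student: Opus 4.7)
The plan is to trace the image of $\mathfrak{p}$ under (\ref{skosn}) through a chain of standard identifications down to $Imm^{sf(1)}(2,1) \cong \Z/8$ and verify that the resulting element is represented by an immersion $\RP^2 \looparrowright \R^3$. First I compute the image of $\mathfrak{p}$ under (\ref{skosn}) with $n=7$, $k=0$, $m=5$ directly: since $\kappa = \id : \RP^7 \hookrightarrow \RP^\infty$ is already transverse to $\RP^2 \subset \RP^7$, the preimage manifold is $N = \RP^2$, and the resulting triple is $(f|_{\RP^2}, \kappa|_{\RP^2}, \Xi_N)$, where $f|_{\RP^2} : \RP^2 \looparrowright \R^8$ is the restriction of the octonionic immersion, $\kappa|_{\RP^2} : \RP^2 \hookrightarrow \RP^\infty$ is the standard inclusion (which factors through $\RP^3$), and $\Xi_N : \nu(f|_{\RP^2}) \cong 5\gamma \oplus \varepsilon$ arises from the splitting $\nu(f|_{\RP^2}) \cong \nu(\RP^2, \RP^7) \oplus \nu(f)|_{\RP^2}$.

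Next, since the characteristic map factors through $\RP^3$, I lift to the $\RP^3$-controlled group, which by (\ref{control}) coincides with the uncontrolled group in this dimension range ($n = 2 < 3$). Now I apply the untwisting procedure $\mathbb{U}$ from Section 5 to four of the $\gamma$-summands: using $4\gamma \cong 4\varepsilon$ over $\RP^3$, the structure $5\gamma \oplus \varepsilon = \gamma \oplus 4\gamma \oplus \varepsilon$ becomes $\gamma \oplus 4\varepsilon \oplus \varepsilon = \gamma \oplus 5\varepsilon$, yielding an element of $Imm^{sf(1) \times fr(5), \RP^3}(2,6)$. Iterating the inverse of the suspension isomorphism (\ref{suspension}) five times -- each step is an isomorphism since $k_1 + k_2 \geq 2$ throughout -- produces an element of $Imm^{sf(1)}(2,1) \cong \Z/8$. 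Crucially, Hirsch's theorem applied to the trivial subbundle $5\varepsilon \subset \nu$ guarantees that this cobordism class is represented by a genuine immersion $g : \RP^2 \looparrowright \R^3$ with $\nu(g) \cong \gamma$.

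By Statement \ref{surfaces}(3), $g$ is regularly homotopic (hence skew-framed cobordant) either to the Boy surface or to its mirror image, and by Statement \ref{surfaces}(2) either of these generates $Imm^{sf(1)}(2,1)$. Therefore the image of $\mathfrak{p}$ under (\ref{skosn}) is a generator of $Imm^{sf(5) \times fr(1)}(2,6)$, as required.

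The main obstacle is the appeal to Hirsch's theorem in the second step: one must actually produce a representative of the desuspended cobordism class whose source is $\RP^2$ and whose image lies in $\R^3$, not merely a representative supported on some cobordant 2-manifold. This follows because the trivializable 5-dimensional summand of the normal bundle provides an explicit regular homotopy compressing $f|_{\RP^2}$ into a 3-dimensional affine subspace of $\R^8$ without altering the source manifold. An alternative approach would be to describe $f|_{\RP^2}$ explicitly via Cayley octonion multiplication and identify it with a classical $\RP^2$-immersion in $\R^3$, but the abstract argument above suffices for the cobordism statement.
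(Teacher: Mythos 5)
Your proposal is correct and follows essentially the same route as the paper: restrict to $N=\kappa^{-1}(\RP^2)=\RP^2$, untwist via $I_{\RP^3}^{-1}$ and desuspend to land in $Imm^{sf(1)}(2,1)$, compress into $\R^3$ by Hirsch's theorem, and invoke items 3) and 2) of Statement \ref{surfaces} to identify the result with $\pm\upsilon$, a generator.
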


\begin{proof}
By definition, the image of the element
 $\mathfrak p$ 
 is represented by the triple
$(f_0,\kappa _0, \Xi _0)$,
where $f_0 : M_0 =\kappa ^{-1} (\RP^2) \subset \RP^7 \stackrel{f }{\looparrowright } \R^8$,
$\kappa _0 = \kappa |_{M_0} : M_0 \to \RP^\infty $,
$\Xi _0 = \Xi _{M_0} \oplus \kappa _0^{*} (\nu (\RP^2 , \RP^7 ))
: \nu (f_0) \cong \varepsilon \oplus 5\kappa _0^{*}\gamma $.

It is clear that $M_0 = \kappa ^{-1} (\RP^2) = \RP^2$.
So, the image of the element
 $\mathfrak p$ in the group $Imm^{sf(5)\times fr(1)} (2,6)$
is represented by an immersion of the
projective plane.
To show that it is cobordant
with 
$\pm E (\mathbb T (E^4 \upsilon ))$,
apply the inverse isomorphism
$\mathbb U \circ E^{-1} : 
Imm^{sf(5)\times fr(1)} (2,6)\cong Imm^{sf(1)} (2,1)$.
By construction, we are
to replace the skew-framing $\Xi _0 $ by
$$
\Psi : 
\nu (f_0) \stackrel{\Xi_0}{\cong }\varepsilon \oplus 5\kappa _0^{*}\gamma 
\stackrel{\id \oplus \kappa _0^{*}(I_{\RP^3}^{-1})}{\cong }
\varepsilon \oplus \kappa _0^{*}\gamma  \oplus 4\varepsilon ,
$$
and, in accordance with the Hirsch theorem,
replace
the immersion
$f_0 : \RP^2 \looparrowright \R^8$
by
a regularly homotopic immersion of the form
$g: \RP^2 \looparrowright \R^3 \subset \R^8$.

By items 3), 2) 
of Statement
\ref{surfaces}, the resulting element 
is cobordant to $\pm \upsilon $.
\end{proof}

\smallskip

To state Theorem \ref{th4},
we introduce elements
$c_1,c_2\in Imm^{sf(8)} (1,8)\cong \Z /2\oplus \Z/2$, they are represented 
correspondingly by the triples:
\begin{equation}\label{circles}
c_1 = (f_1 , \kappa _1 , \Xi _1 ),
\quad
c_2 = (f_2 , \kappa _2 , \Xi _2 ),
\end{equation}
where
 $f_1: S^1 \subset \R^2 \subset \R^9$ is the standard embedding,
$\kappa _1 : S^1 \to \RP^\infty $ is
homotopically non-trivial,
$\Xi _1 :  \nu (f_1) \cong \kappa _1^{*}(8\gamma )$ is an (arbitrary) isomorphism;
$f_2: S^1 \subset \R^2 \subset \R^9$ is the ``figure eight'' immersion,
$\kappa _2 : S^1 \to \RP^\infty $ is
homotopically non-trivial,
$\Xi _2 :  \nu (f_2) \cong \kappa _2^{*}(8\gamma )$ is an (arbitrary) isomorphism.

By $\eta \in Imm^{fr(1) } (1,1) \cong \Z /2$
the generator is denoted.

Here symbol $\times $ 
denotes the product homomorphism,
see, for example, \cite[(8)]{A-F}.

\begin{theorem}\label{th4}
1)
Under the
composition of the
homomorphisms 
$$
Imm^{sf(0)\times fr(1)}(7,1)  \otimes
Imm^{fr(1)}(1,1)  \stackrel{\times }{\to }
Imm^{sf(0)\times fr(2)}(8,2)
\stackrel{(\ref{skosn})}{\to } 
$$
$$
\to
Imm^{sf(5)\times fr(2)}(3,7) \stackrel{(E^2)^{-1}}{\cong  } Imm^{sf(5)}(3,5) 
$$
the image of the element
$\mathfrak p \otimes \eta $
is a generator of the group $Imm^{sf(5)}(3,5)\cong \Z /2$.

2)
Under the
composition of the
homomorphisms 
$$
Imm^{sf(0)\times fr(1)}(7,1) \otimes Imm^{sf(8)}(1,8) 
\stackrel{\times }{\to }  Imm^{sf(0)\times sf(8)}(8,8)
\stackrel{(\ref{transfer1})}{\to }
$$
$$
\to
Imm^{sf(0) \times fr(8)}(8,8)
\stackrel{(\ref{skosn})}{\to } 
Imm^{sf(5)\times fr (8)}(3,13) \stackrel{{(E^8)}^{-1}}{\cong }
Imm^{sf(5)}(3,5)
$$
both the image of the element
$\mathfrak p \otimes c_1$
and the image of the element
$\mathfrak p \otimes c_2$
are equal to the generator of the group
 $Imm^{sf(5)}(3,5)$.
\end{theorem}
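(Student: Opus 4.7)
The strategy is to reduce both parts of the theorem to a single non-vanishing statement, namely that the product $\upsilon\cdot\eta$ is the non-trivial element of $Imm^{sf(1)}(3,1)\cong\Z/2$. All intermediate steps are naturality arguments combining the Proposition preceding Theorem \ref{th4} with the compatibility of the external product with (\ref{skosn}) and with the transfer (\ref{transfer1}).

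For Part 1, the first step is the naturality identity: for representatives $(f,\kappa,\Xi)$ of $\mathfrak p$ and $(g,\Psi)$ of $\eta$, the external product is represented by $(f\times g,\kappa\circ\mathrm{pr}_1,\Xi\oplus\Psi)$, whose characteristic map factors through the first projection $\mathrm{pr}_1:\RP^7\times S^1\to\RP^7$. Consequently the preimage of $\RP^2\subset\RP^7$ appearing in (\ref{skosn}) equals $\kappa^{-1}(\RP^2)\times S^1$, and the induced skew-framing splits as an external sum of the framings of the two factors. This yields an identity
$$
(E^2)^{-1}\circ(\ref{skosn})(\mathfrak p\times\eta) \;=\; \bigl((\ref{skosn})(\mathfrak p)\bigr)\times\eta
$$
up to the canonical suspension and untwisting isomorphisms. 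By the Proposition preceding Theorem \ref{th4}, $(\ref{skosn})(\mathfrak p)$ coincides with $\pm\mathbb T(E^4\upsilon)$, so the image of $\mathfrak p\otimes\eta$ under the full composition of Part 1 equals $\pm\upsilon\cdot\eta\in Imm^{sf(1)}(3,1)$.

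For Part 2, the only additional step is to evaluate the transfer on $c_1$ and $c_2$. The diagram following Example \ref{1-1-transfer} identifies the transfer $Imm^{sf(8)}(1,8)\to Imm^{fr(8)}(1,8)$ with the transfer $Imm^{fr(1)\times sf(0)}(1,1)\to Imm^{fr(1)}(1,1)$, which by Example \ref{1-1-transfer} is given by $(f,\kappa,\Xi)\mapsto s(\kappa^*\gamma)$. Since both characteristic maps $\kappa_1,\kappa_2$ in (\ref{circles}) are homotopically non-trivial, $s(\kappa_i^*\gamma)=1$, so the transfer of each $c_i$ equals $\eta$. By naturality of the external product with respect to the transfer on the second factor, the composition of Part 2 applied to $\mathfrak p\otimes c_i$ coincides with that of Part 1 applied to $\mathfrak p\otimes\eta$, giving $\pm\upsilon\cdot\eta$ in both cases.

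The technical heart is therefore the non-vanishing of $\upsilon\cdot\eta$ in $Imm^{sf(1)}(3,1)\cong\Z/2$. Since this group is cyclic of order two, it suffices to show the class is not null-cobordant. By Example \ref{gener-1} the generator is $t_1(\nu)$ where $\nu$ generates $\Pi_3\cong\Z/24$. I would establish $\upsilon\cdot\eta=\pm t_1(\nu)$ by passing to stable homotopy groups of the stunted projective space: the external product $\Pi_3(\RP^\infty_1)\otimes\Pi_1\to\Pi_4(\RP^\infty_1)$ sends the generator tensored with $\eta$ to the non-zero element of $\Pi_4(\RP^\infty_1)\cong\Z/2$. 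This multiplicative detection is the main obstacle; the preceding reductions are essentially formal consequences of naturality.
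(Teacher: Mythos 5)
Your formal reductions coincide with the paper's: part 1 is exactly the commutativity of the square interchanging the external product with the homomorphism (\ref{skosn}) together with the Statement identifying the image of $\mathfrak p$ with $\pm E(\mathbb T(E^4\upsilon))$, and part 2 is the extension of that diagram by the compatibility of the product with the transfer, plus the direct check (as in Example \ref{1-1-transfer}) that the transfer sends $c_1,c_2$ to the generator of $Imm^{fr(8)}(1,8)$ because $\kappa_1,\kappa_2$ are homotopically non-trivial. Up to this point your argument is the paper's argument.

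The genuine gap is in the step you yourself call the main obstacle. You reduce everything to the non-vanishing of $\upsilon\cdot\eta$ in $Imm^{sf(1)}(3,1)\cong\Pi_4(\RP^\infty_1)$, claiming the image of $\mathfrak p\otimes\eta$ in $Imm^{sf(5)}(3,5)$ ``equals $\pm\upsilon\cdot\eta$ up to the canonical suspension and untwisting isomorphisms.'' No such untwisting isomorphism $Imm^{sf(5)}(3,5)\cong Imm^{sf(1)}(3,1)$ exists: the twisting/untwisting procedures $\mathbb T,\mathbb U$ are defined only with $\RP^3$-control, and the paper stresses that Corollary \ref{periodicity} has no analogue for $n=3$. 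Concretely, $Imm^{sf(1),\RP^3}(3,1)\cong\Z/2\oplus\Z/2$ is generated by $(\mathbb F,c,\Xi_{\mathbb F})$ and $(\mathbb E,\rho,\Xi_{\mathbb E})$; the projection to $Imm^{sf(1)}(3,1)$ kills $(\mathbb E,\rho,\Xi_{\mathbb E})$, whereas the composition ``twist, then project to $Imm^{sf(5)}(3,5)$'' sends \emph{both} generators to $t_1(\nu)$ (Examples \ref{F-5}, \ref{rho-Twist}), so its kernel is generated by their sum. Hence an element such as $(\mathbb F,c,\Xi_{\mathbb F})+(\mathbb E,\rho,\Xi_{\mathbb E})$ is non-zero in $Imm^{sf(1)}(3,1)$ but maps to zero in $Imm^{sf(5)}(3,5)$: knowing $\upsilon\cdot\eta\neq0$ in $\Pi_4(\RP^\infty_1)$ does not determine the class you actually need in $\Pi_8(\RP^\infty_5)$. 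What is required is the product computed in the $sf(5)$ setting, namely that $E(\mathbb T(E^4\upsilon))\times\eta$ equals the generator $E^2(t_1(\nu))$; the paper does not prove this here but imports it from \cite[Theorem 3]{A-F}. To repair your argument you must either cite that result or compute the product at the $\RP^3$-controlled level precisely enough to identify which element of $\Z/2\oplus\Z/2$ it is before projecting, not merely detect it in the $sf(1)$ quotient.
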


\begin{proof}
Both assertions follow
from investigation 
of the same commutative diagram.
It is too large to be placed on a page,
and we will consider it in parts;
moreover, in this proof
groups $Imm$ are denoted shortly by $I$.

1)
We need to show that
going through the diagram down-right
$$
\begin{CD}
I^{sf(0)\times fr(1)}(7,1) \otimes I^{fr(1)}(1,1) @> (\ref{skosn})\times\id >>
I^{sf(5)\times fr(1)}(2,6) \otimes I^{fr(1)}(1,1)   \\
@V \times VV @V \times VV   \\
I^{sf(0)\times fr(2)}(8,2) @> (\ref{skosn}) >>  I^{sf(5)\times fr(2)}(3,7) 
\cong   I^{sf(5)}(3,5) 
\end{CD}
$$
the element
$\mathfrak p\otimes \eta $ 
maps to the generator.
Let us go right-down instead.
Going right, we obtain (up to a sign)
$E(\mathbb T (E^4(\upsilon ))) \otimes \eta $,
and going down this gives
the generator $E^2(t_1 (\nu ))$
\cite[Theorem 3]{A-F}.

2) The diagram 
$$
\begin{CD}
I^{sf(0)\times fr(1)}(7,1) \otimes I^{sf(8)}(1,8) @> \id\times (\ref{transfer1})  >>
I^{sf(0)\times fr(1)}(7,1) \otimes I^{fr(8)}(1,8)   \\
@V \times VV @V \times VV  \\
I^{sf(0)\times sf(8)}(8,8) @> (\ref{transfer1}) >>  I^{sf(0)\times fr(8)}(8,8)
\cong I^{sf(0)\times fr(2)} (8,2) 
\end{CD}
$$
commutes, and it
can be considered as an
extension to the right 
for the diagram of item 1).
Hence, in order to prove 2) it suffices
to check
that the transfer
$Imm^{sf(8)} (1,8)\to Imm^{fr(8)} (1,8)$
maps the elements $c_1$, $c_2$ to
the generator of the group $ Imm^{fr(8)} (1,8)\cong Imm^{fr(1)} (1,1)$.
This is verified directly.
\end{proof}

\section{Sketch of proof of the Theorem
on non-immersion of
$\RP^{10}$ into $\R^{15}$}

In \cite{A-F}, the authors
suggested and partly realised
a scheme of a geometric proof
of the Baum-Browder theorem
on non-immersion of $\RP^{10}$ into $\R^{15}$.
By the Sanderson lemma \cite[Lemma (9.7)]{BB}, 
it suffices to show that
$T\RP^{15}$ does not allow 10 
linearly independent
vector fields
over
$\RP^{10}\subset \RP^{15}$.
A stronger fact is valid: $T\RP^{15}$ does not allow 9 linearly independent vector fields
over $\RP^{10}\subset \RP^{15}$
\cite[Theorem (9.5)]{BB}.

Koschorke developed singularity approach
for the problem
of existence
of a monomorphism between two arbitrary vector
bundles over a manifold
\cite{K-book}. 
The method
prescribes to take
a general position morphism
$9\varepsilon \to
T\RP^{15} |_{\RP^{10}}$ over $\RP^{10}$ 
and study its
singular points --- the points where
this morphism has non-zero kernel.
It turns out that
the 
normal bundle of the
singular manifold
has special structure; 
Koschorke defines a special
cobordism group and shows that 
in this group
the singular manifold
is zero-cobordant if and only if
there exists
a monomorphism of the given bundles.

In our problem it is unconvenient to take
a morphism
$9\varepsilon \to T\RP^{15}$ over $\RP^{10}$.

Instead we will prove that there is no monomorphism
$10\varepsilon \to T\RP^{15}\oplus \varepsilon $ 
over $\RP^{10}$.
Secondly, we temporarily change the basis space ---
instead $\RP^{10}$ we will consider $\RP^{15}$
(the possibility to construct a convenient general position morphism arises
due to availability 
of the Hopf bundle $\RP^{15} \to S^8$).
The auxiliary problem
$10\varepsilon \to T\RP^{15}\oplus \varepsilon $ 
over $\RP^{15}$
does not satisfy conditions of the Koschorke theorem.
Nevertheless, we can consider
the singular manifold and its normal bundle.
The singularity happens
to be represented by an  8-manifold
with special structure of normal bundle;
this singularity manifold can be interpreted as
an element
$x \in Imm^{sf(0) \times fr(8)}(8,8)$, 
call it an auxiliary obstruction. 
Returning 
to the original problem
$10\varepsilon \to T\RP^{15}\oplus \varepsilon $ 
over $\RP^{10}$ corresponds to the homomorphism
of the 
``obstruction'' groups 
(here, inverted commas 
are used to mention that this word is not
fully exact for the problem over $\RP^{15}$);
this homomorphism decomposes as
$$
Imm^{sf(0) \times fr(8)}(8,8)
\to Imm^{sf(5)} (3,5)
\hookrightarrow
Imm^{sf(5)\times sf(6)} (3,11).
$$

In our next paper  we will show that in the group $Imm^{sf(0) \times fr(8)}(8,8)$
 holds the equation
$x = y + 2z + t, $
where $y$ is the image of the product
(see formula (\ref{circles}))
$\mathfrak p \otimes c_1 \in
Imm^{sf(0)\times fr(1)}(7,1) \otimes Imm^{sf(8)}(1,8)$
as in item 2) of Theorem $\ref{th4}$;
$t$ is an element which admits 
a clear description.
By Theorem $\ref{th1}$
the even element $2z$ maps to zero.
Usage of Theorem \ref{zamena}
will allow us to show that the element $t$
also maps to zero.
By  Theorem \ref{th4} and \cite[Theorem~3]{A-F}
the element $y$ maps to a non-zero
element.
Hence,
the image of the element $x$ also  differs from zero ---
but it is exactly the obstruction to existence
of 10 linearly independent vector fields
in the bundle $T\RP^{15} \oplus
\varepsilon $ over $\RP^{10}$.


\begin{thebibliography}{99}

\bibitem{A-F}
P.Akhmetiev, O.Frolkina. 
On non-immersibility of $\RP^{10}$ to $\R^{15}$ //
Topology and its Applications, V.160, 11 (2013). 1241-1254.

\bibitem{A-E}
P.M.Akhmet'ev, P.J.Eccles.
The relationship between framed bordism and skew-framed bordism //
Bull. Lond. Math. Soc. 39, No. 3, 473-481 (2007).

\bibitem{Course}
P.M.Akhmet'ev, Yu.P.Solovyov.
Immersed surfaces.
Special course.
Spring 2002. 
%�.�.��������, �.�.��������.
%����������� �����������. ��������. ����� 2002. 
\\
http://www.mccme.ru/ium/s02/imm.html

\bibitem{BB}
P.F.Baum, W.Browder.
The cohomology of quotients of classical groups.
Topology 3 (1965), 305--336.

\bibitem{B-S}
J. M. Boardman, B. Steer.
On Hopf invariants //
Comment. Math. Helv.
1967, 42, 1, 180--221.


\bibitem{Brown1972}
E. H. Brown, Jr.
Generalizations of the Kervaire Invariant //
Annals of Mathematics
Second Series, Vol. 95, No. 2 (1972), 368-383.


\bibitem{Eccles}
P.J.Eccles.
Multiple points of codimension one immersions.
Topology Symposium, Siegen 1979, Lecture Notes in Math., vol. 788, Springer, Berlin, 1980, pp. 23�38.

\bibitem{Freedman}
M.Freedman. Quadruple points of 3-manifolds in $S^4$.
Comment. Math. Helv. 53 (1978), 385--394.


\bibitem{Grant}
M.Grant.
Bordism of Immersions. Thesis. 2006.

\bibitem{gp}
V.Guillemin, A.Pollack.
Differential Topology.
AMS Chelsea Publishing.
Providence, Rhode Island. 2010.

\bibitem{GM}
L.Guillou, A.Marin.
Une extension d'un th\'eor\`eme de Rohlin sur la signature //
C. R. Acad. Sci. Paris, S\'er. A--B, 285 (1977), A95--A98.
%\\
%�������: �.���� � �.�����.
%��������� ������� ������� � ���������. � �����:
%� ������� ���������� ���������. 
%�����. � �����. � ����. 
%���.: �. ����, �. �����.  �.: ���, 1989. %�.110--133.

\bibitem{Hass-Hughes}
J.Hass, J.Hughes. 
Immersions of surfaces in 3-manifolds //
Topology 24 (1985). 97--112. 

\bibitem{Hirsch-paper}
M.W.Hirsch.
Immersions of manifolds.
Trans. Amer. Math. Soc. 93 (1959), 242--276.

\bibitem{Hirsch-book}
M.W.Hirsch. Differential Topology. 
Springer, 1976.

%�.����.
%���������������� ���������.
%�.: ���, 1979.

\bibitem{Husemoller}
D.Husemoller. 
Fibre bundles.
McGraw Hill, 1966.

%�.����������. ����������� ������������. �.: ���, %1970.

\bibitem{James}
I. M. James. Cross-sections of Stiefel manifolds // 
Proc. London Math. Soc. (3) 8 (1958), 536�547.

\bibitem{K-S}
U.Koschorke, B.Sanderson.
Self-intersections and higher Hopf invariants //
Topology 17 (1978), 283--290.

\bibitem{K-book}
U.Koschorke.
Vector Fields and Other Vector Bundle Morphisms - A Singularity Approach.
Lecture Notes in Math. 847 (Springer, Berlin, 1981).


\bibitem{L}
A.Liulevicius.
A theorem in homological algebra and stable homotopy of projective spaces //
Trans. Amer. Math. Soc. 109 (1963), 540-552.

\bibitem{Mahowald}
M. Mahowald.
A short proof of the James periodicity of $ \pi _{k+p} (V_{k+m,m})$ //
Proc. Amer. Math. Soc. 16 (1965), 512.

\bibitem{Ma}
Y.Matsumoto. An elementary proof of Rochlin's
signature theorem and its extension
by Guillou and Marin //
\`A la recherche de la topologie perdue, 119--139,
Birkh\"auser, 1986.
%\\
%�������:
%�.��������. ������������ ��������������
%������� ������� � ��������� � �� ���������,
%�������������� ���� � ������. � �����:
%� ������� ���������� ���������. 
%�����. � �����. � ����. 
%���.: �. ����, �. �����.  �.: ���, 1989. %�.134--148.

\bibitem{Pinkall}
U. Pinkall.
Regular homotopy classes of immersed surfaces //
Topology, 24 (1985), N 4, 421--434.

\bibitem{pontr}
L.S.Pontryagin.
Smooth manifolds and their applications to homotopy theory. 
Transl., II. Ser., Am. Math. Soc. 11, 1-114 (1959).
Translated from: 
Trudy Mat. Inst. im. Steklov. no. 45. Izdat. Akad. Nauk SSSR, Moscow, 1955.

\bibitem{Rokhlin}
V.A.Rokhlin,
{\it A Three-dimensional manifold is the boundary of a four-dimensional one} (in Russian) //
Dokl. Akad. Nauk SSSR, 81, 3 (1951), 355--357.

%�.�.������.
%{\it ���������� ������������ -- ������� %��������������} //
%��� T.81 N 3 (1951). c.355--357.


\bibitem{Sz2}
A.Sz$\ddot{\rm{u}}$cs, {\it Cobordism of immersions and singular maps, loop
spaces and multiple points}, Geometric and Algebraic Topology,
Banach Center Publ., 18 PWN, Warsaw (1986) 239-253.

\bibitem{Thom}
R. Thom. Quelques propri\'et\'es
globales des vari\'et\'es diff\'erentiables //
Comment. Math. Helv. 28 (1954), 17�86.
%\\ �������:
%�.���. ��������� �������� ``� �����'' 
%���������������� ������������ //
%����������� ������������ � �� ����������, ��, �., 
%1958, 291�351.


\bibitem{Uchida}
F.Uchida.
Cobordism groups of immersions //
Osaka J. Math.
8 (1971), 207--218.

\bibitem{Vogel}
P.Vogel. Cobordisme d'immersions //
Ann. Scient. \'Ec. Norm. Sup., 4 s\'erie, t.7, 1974, 317--358.

\bibitem{Watabe}
T.Watabe,
{\it $SO(r)$-cobordism and embedding of 4-manifolds} // 
Osaka J. Math. 4 (1967), 133--140.

\bibitem{Wells}
R.Wells, {\it Cobordism groups of immersions.} // Topology 5 (1966) 281-
294.

\end{thebibliography}
\end{document}